\documentclass[a4,twocolum]{autart}
\usepackage{graphics,color}
\usepackage{graphicx}
\usepackage{amssymb,amsmath,amsfonts}
\usepackage{amsmath,mathrsfs,bm,url,times}
\usepackage{latexsym}
\usepackage{subfigure}
\usepackage{algorithmic}
\usepackage{algorithm}
\usepackage{natbib}

\newtheorem{proposition}{Proposition}
\newtheorem{theorem}{Theorem}
\newtheorem{lemma}{Lemma}
\newtheorem{definition}{Definition}

\newtheorem{remark}{Remark}
\newtheorem{corollary}{Corollary}

\newenvironment{proof}{\vspace{1ex}\noindent{\bf Proof}\hspace{0.5em}}
	{\hfill\qed\vspace{1ex}}
\def\qed{\hfill \vrule height 6pt width 6pt depth 0pt}

\newcommand{\R}{{\mathbb R}}
\newcommand{\onetom}{1,\cdots,m}
\newcommand{\oneton}{1,\cdots,n}
\newcommand{\onetoK}{1,\cdots,K}

\newcommand{\T}{\mathcal T}

\newcommand{\ignore}[1]{}

\newcommand{\CASE}[1]{\STATE \textbf{case} #1\textbf{:} \begin{ALC@g}}
\newcommand{\ENDCASE}{\end{ALC@g}}

\newcommand{\DEFAULT}{\STATE \textbf{default:} \begin{ALC@g}}
\newcommand{\ENDDEFAULT}{\end{ALC@g}}
\newcommand{\DEFAULTLINE}[1]{\STATE \textbf{default:} }

\begin{document}
\begin{frontmatter}

\title{Contraction and incremental stability of switched Carath\'{e}odory systems using multiple norms\thanksref{foot1}}
\thanks[foot1]{This work is jointly supported by the National Natural Sciences Foundation of China under Grant No. 61273309, the Program for New Century Excellent Talents in University (NCET-13-0139), the Key Laboratory of Nonlinear Science of Chinese
Ministry of Education and the Shanghai Key Laboratory for Comtemporary Applied Mathematics, Fudan University.\\
Corresponding Author: W. L. Lu. Tel: +86-21-65643265. Fax: +86-21-65646073.}
\author[Shanghai]{Wenlian Lu}\ead{wenlian@fudan.edu.cn}\and
\author[Naples]{Mario di Bernardo}\ead{mario.dibernardo@unina.it}
\address[Shanghai]{School of Mathematical Sciences and Centre for Computational Systems Biology, Fudan University, China}
\address[Naples]{Department of Electrical Engineering and Information Technology, University of Naples Federico II,  Italy,\\ Department of Engineering Mathematics, University of Bristol,  U.K.}
\begin{keyword}                           
Contraction; Incremental stability; Switched Carath\'{e}odory system; Synchronization.               
\end{keyword}
\begin{abstract}
In this paper, incremental exponential asymptotic stability of a class of switched Carath\'{e}odory nonlinear systems is studied based on the novel concept of measure of switched matrices via multiple norms and the transaction coefficients between these norms. This model is rather general and includes the case of staircase switching signals as a special case. Sufficient conditions are derived for incremental stability allowing for the system to be incrementally exponentially asymptotically stable even if some of its modes are unstable in some time periods. Numerical examples on switched linear systems switching periodically and on the synchronization of switched networks of nonlinear systems are used to illustrate the theoretical results.
\end{abstract}
\end{frontmatter}
\section{Introduction}
Studying incremental stability of nonlinear systems is particularly important in many application areas, including observer design and, more recently, consensus and synchronisation problems in network control where convergence analysis is a fundamental step \citep{Wan_Slo_05,Rus_diB_09,Rus_diB_09b,Rus_diB_Son_10,Rus_diB_Slo_11,RuBe:13}.

Since the early work by \cite{Lewis,de:67}, contraction theory has been highlighted as a promising approach to study incremental exponential asymptotic stability ($\delta$EAS) of nonlinear systems \citep{Loh_Slo_98,FoSe:14,Ang}; also see \cite{Jou_05} for an historical overview. In particular, as shown by \cite{Loh_Slo_98}, sufficient conditions for $\delta$EAS of a given nonlinear system over an invariant set of interest can be obtained by studying the matrix measure of its Jacobian induced by some vector norm. It is possible to prove, as done by \cite{Loh_Slo_98,Rus_diB_Son_10}, that if such measure is negative definite in that set for all time then any two trajectories will exponentially converge towards each other; the rate  of convergence being estimated by the negative upper bound on the Jacobian measure.

Numerous applications of contraction analysis have been presented in the literature from observer design to the synthesis of network control systems. See e.g. \cite{Loh_Slo_98,FoSe:14,Rus_diB_Slo_11}.
Remarkably, the problem of studying incremental stability of switched and hybrid systems has attracted relatively little attention despite the large number of potential applications, e.g. power electronic networks, variable structure systems, walking and hopping robots, to name just a few \citep{BeBu:08,Li:03,Co:08}.

It has been suggested by \cite{RuDi:11,DiLi:14} that extending contraction analysis to this class of systems can be a viable and effective approach to obtain conditions for their incremental asymptotic stability. Related approaches include the work on convergence of piecewise affine continuous systems presented by \cite{PaPo:05,PaPo:05_2,PaPo:07,PaWo:08} and the recent conference papers \citep{dili:12,DiFi:14}.

One limitation of the existing extensions of contraction theory to switched systems, \cite[e.g.][]{DiLi:14}, is that they rely on the use of a unique matrix measure to assess the Jacobian of each system modes. This is a particularly restricting assumption as it would be desirable to use measures induced by different norms to evaluate the Jacobian of each of the system modes. This would correspond to studying incremental stability of the switched system with multiple incremental Lyapunov functions rather than using a common one (which is much harder to find).

The aim of this paper is to address this problem and present conditions for contraction and incremental stability of a large class of switched Carath\'{e}odory systems. The key idea is to define a novel concept of matrix measure via multiple norms and exploit the transaction coefficients between these norms. In so doing, sufficient conditions are derived for incremental stability that allow for a system to be $\delta$EAS even if some of its modes are unstable (or not contracting) over some time intervals. The theoretical results are illustrated via their applications to some representative examples, including synchronisation in blinking networks.


\section{Preliminaries}\label{sec:norm}

We focus on switched dynamical systems of the form
\begin{eqnarray}
\dot{x}=f(x,r(t))\label{ds1}
\end{eqnarray}
where $x\in\R^{n}$ and the switching signal $r(t)$ is assumed to be a real-valued piecewise continuous (PWC for short) function with respect to time: there exist countable discontinuous points $t_{0}<t_{1}<\cdots<t_{i}<\cdots$ such that $r(t_{i}\pm)$ exist and $r(t_{i})=r(t_{i}+)$ for all $t_{i}$. A typical example is the staircase function $
r(t)=\xi_{i}$, for $t_{i}\le t< t_{i+1},~i=0,1,\cdots$,
for the increasing time sequence $\{t_{j}\}_{j\ge 0}$, which has been widely used as switching signal in control systems \citep{Li:03}.



Here we make the following hypothesis:
\begin{itemize}
\item[${\mathcal H}_{1}$:]  For some $C\subset\R^{n}$, the vector field $f(x,r(t)):C\times\R_{\ge 0}\to\R^{n}$ is (i) continuous with respect to $(x,r)$; (ii) continuously differentiable with respect to $x$, and (iii)  there exists a Lebesgue measurable function $m(t)$ such that $|f(x,r(t))|\le m(t)$ for all $x\in C$ and $t\in\R_{\ge 0}$.
\end{itemize}
It can be seen that under hypothesis ${\mathcal H}_{1}$, and with $r(t)$ defined as above, the vector field $f(x,r(t))$ defines a Carath\'{e}odory switched system \citep{Fi:88}. It can be proved that, given an initial condition in $C$, a solution of a Carath\'{e}dory system exists and is unique \citep{Hale}. We define $\phi(t;t_{0},x_{0},r_{t})$ as the solution of (\ref{ds1}) with $x(t_{0})=x_{0}$ and the switching signal $r(t)$, where $r_{t}$ denotes the trajectory of $r(t)$ up to $t$, i.e., $r_{t}=\{r(s)\}_{t_{0}\le s\le t}$.

In this paper, $|\cdot|_{\chi}$ stands for a specific vector norm in Euclidean space and the matrix norm induced by it, which can be defined in different ways that are  all equivalent. The {\em transaction coefficients} from the norm $|\cdot|_{a}$ to $|\cdot|_{b}$ is defined as $
\beta_{ab}=\sup_{|x|_{a}=1}|x|_{b}$ \citep{Bou}.

A continuous function $\alpha : [0, a) \to [0, \infty)$ is said to belong to {\em class $\mathcal{K}$} if (I) it is strictly increasing; (II) $\alpha(0)$ = 0. And, a continuous function $\beta (\rho,t): [0, a) \times [0, \infty) \to [0, \infty)$ is said to belong to {\em class $\mathcal{KL}$} if (1) for each fixed $t$, the function $\beta(\rho,t)$ belongs to class $\mathcal K$; (2) for each fixed $\rho$, the function $\beta(\rho,t)$ is decreasing with respect to $t$ and $\lim_{t\to\infty}\beta(\rho,t)=0$. In addition, if a function $\beta(\rho,t)$ of class $\mathcal{KL}$ converges to $0$ exponentially as $t\to\infty$, $\beta(\rho,t)$ is said to be of class ${\mathcal{EKL}}$.  Here, we give the following definition of incremental stability from \cite{Ang} with modifications.

\begin{definition}\label{def0}
System (\ref{ds1}) is said to be incrementally asymptotically stable ($\delta AS$ for short) with $r(t)$ in the region $C\subset\R^{n}$ if there exists a function $\beta(s,t)$ of class $\mathcal{KL}$ such that for any initial data $x_{0},y_{0}\in C$ and starting time $t_{0}$, the following property holds
\begin{eqnarray*}
|\phi(t+t_{0};t_{0},x_{0},r_{t})-\phi(t+t_{0};t_{0},y_{0},r_{t})|\le\beta(|x_{0}-y_{0}|,t)
\end{eqnarray*}
for some norm $|\cdot|$. If $\beta(s,t)$ is picked independently of the initial time $t_{0}$, then system (\ref{ds1}) is said to be incrementally uniformly asymptotically stable ($\delta UAS$ for short). If $\beta(s,t)$ is of class $\mathcal{EKL}$, then system (\ref{ds1}) is said to be incrementally exponentially asymptotically stable ($\delta EAS$ for short) and incrementally uniformly exponentially asymptotically stable ($\delta UEAS$ for short) if $\beta(s,t)$ is chosen independently of $t_{0}$.
\end{definition}

\begin{definition}\label{kappa}
A set $C\subset\R^{n}$ is said to be a $\kappa$-reachable set if there exists  a continuously differentiable curve $\gamma(s):[0,1]\to C$ that links $x_{0}$ and $y_{0}$, i.e., $\gamma(0)=x_{0}$ and $\gamma(1)=y_{0}$, and satisfies $
|\gamma'(s)|_{\chi(t_{0})}\le \kappa|x_{0}-y_{0}|_{\chi(t_{0})}$, for all $s\in[0,1]$ and
some constant $\kappa>0$, independently of $x_{0}$ and $y_{0}$.
\end{definition}
\section{Switched matrix measures and general contraction analysis}

The matrix measure induced by the vector norm $|\cdot|_{\chi}$, where $\chi$ is the index for the norm being used, is defined as
\begin{eqnarray*}
\mu_{\chi}(A)=\lim_{h\to 0+}\frac{1}{h}\left[|(I_{n}+hA)|_{\chi}-1\right]
\end{eqnarray*}
for a square matrix $A\in\R^{n,n}$ and was used for the contraction analysis of smooth nonlinear systems, see e.g. \cite{Loh_Slo_98}.

Given a PWC function $\chi(t)$, the left (right) limit of $|\cdot|_{\chi(t)}$ at time $t$ is defined as $\lim_{h\to 0-}|x|_{\chi(t+h)}$ ($\lim_{h\to 0+}|x|_{\chi(t+h)}$) if it exists for all $x\in R^{n}$, and is denoted by $|\cdot|_{\chi(t\pm)}$ respectively. We say that the switched norm $|\cdot|_{\chi(t)}$ is {\em continuous} at time $t$ if $|x|_{\chi(t-)}=|x|_{\chi(t+)}=|x|_{\chi(t)}$ for all $x\in\R^{n}$, i.e. if $|\cdot|_{\chi(t)}$ is {\em left and right continuous}.
We say that $|\cdot|_{\chi(t)}$ is {\em uniformly equivalent} if there exists a constant $D>0$ such that $|x|_{\chi(t)}\le D |x|_{\chi(s)}$ for all $x\in\R^{n}$ and $t,s\in \R$.

We can now extend the definition of matrix measure to the case of multiple norms, taking the time-varying nature of $\chi(t)$ into consideration, as follows.
\begin{definition}
The switched matrix measure with respect to multiple norms $|\cdot|_{\chi(t)}$ is defined as
\begin{eqnarray}
\nu_{\chi(t)}(A)=\overline{\lim_{h\to 0+}}\frac{1}{h}\sup_{|x|_{\chi(t)}=1}\left[|(I_{n}+hA)x|_{\chi(t+h)}-1\right]\label{nu}
\end{eqnarray}
if the limit exists, where $\overline{\lim}$  stands for the limit superior.
\end{definition}
It can be seen that if $\chi(t)$ is constant over an interval, say $[t,t+\delta)$, then $\nu_{\chi(t)}(A)=\mu_{\chi(t)}(A)$ over that interval.

The existence of the switched matrix measure is related to the {\em partial differential} of the switched norm $|\cdot|_{\chi(t)}$ defined as follows
\begin{eqnarray*}
\overline{\partial}_{t} (|\cdot|_{\chi(t)})=\overline{\lim_{h\to 0+}}\sup_{|x|_{\chi(t)}=1}\frac{|x|_{\chi(t+h)}-1}{h}.
\end{eqnarray*}
We say that the multiple norm $|\cdot|_{\chi(t)}$ is {\em right regular} at time $t$ if $\overline{\partial}_{t} (|\cdot|_{\chi(t)})$ exists.  Thus, we have
\begin{proposition}
If $|\cdot|_{r(t)}$ is right regular at $t$, then (i). $|\cdot|_{\chi(t)}$ is right continuous at $t$; (ii). $\nu_{\chi(t)}(A)$ exists at $t$.
\end{proposition}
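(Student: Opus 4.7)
The plan is to distil a one-sided Lipschitz-in-$h$ bound for the norm out of right regularity and feed it into each claim. Let $L:=\overline{\partial}_{t}(|\cdot|_{\chi(t)})$, which by hypothesis is a finite real number. Unwinding the $\limsup$, for every $\varepsilon>0$ there exists $\delta>0$ such that $\sup_{|x|_{\chi(t)}=1}(|x|_{\chi(t+h)}-1)\le (L+\varepsilon)h$ for every $h\in(0,\delta)$, and positive homogeneity upgrades this to
\begin{equation*}
|x|_{\chi(t+h)}\le\bigl(1+(L+\varepsilon)h\bigr)|x|_{\chi(t)}\qquad\text{for all }x\in\R^{n},\ 0<h<\delta.
\end{equation*}

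For part (i), letting $h\to 0^+$ in the displayed bound gives $\limsup_{h\to 0^+}|x|_{\chi(t+h)}\le|x|_{\chi(t)}$ for each fixed $x$. To produce the matching $\liminf$, I would symmetrise by exploiting that each $|\cdot|_{\chi(t+h)}$ is itself a norm on $\R^{n}$: the linear bound forces the ratios $|x|_{\chi(t+h)}/|x|_{\chi(t)}$ to be locally uniformly controlled near $h=0$, from which a companion estimate $|x|_{\chi(t+h)}\ge(1-O(h))|x|_{\chi(t)}$ can be squeezed. Pinching the two bounds yields $\lim_{h\to 0^+}|x|_{\chi(t+h)}=|x|_{\chi(t)}$, which is right continuity. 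This lower direction is the chief technical obstacle, since right regularity is a one-sided hypothesis, and the symmetrisation step leans on a uniform-equivalence assumption on the norm family (or on a reading of ``exists'' as a genuine two-sided limit which forces the companion inequality).

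For part (ii), I split the difference quotient defining $\nu_{\chi(t)}(A)$ as
\begin{equation*}
\frac{|(I+hA)x|_{\chi(t+h)}-1}{h}=\frac{|(I+hA)x|_{\chi(t+h)}-|(I+hA)x|_{\chi(t)}}{h}+\frac{|(I+hA)x|_{\chi(t)}-1}{h}.
\end{equation*}
After taking $\sup_{|x|_{\chi(t)}=1}$ and $h\to 0^+$, the second summand is precisely the classical matrix measure $\mu_{\chi(t)}(A)$, whose existence is standard. For the first, setting $\tilde{x}:=(I+hA)x/|(I+hA)x|_{\chi(t)}$ rewrites it as $|(I+hA)x|_{\chi(t)}\cdot(|\tilde{x}|_{\chi(t+h)}-1)/h$; the prefactor tends to $1$ as $h\to 0^+$ and the trailing factor is majorised by the quantity whose $\limsup$ is $L$. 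Consequently $\nu_{\chi(t)}(A)\le L+\mu_{\chi(t)}(A)<\infty$, so the $\limsup$ in \eqref{nu} is finite and $\nu_{\chi(t)}(A)$ exists as claimed.
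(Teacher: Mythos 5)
Your part (ii) is essentially the paper's own argument: the same splitting of the difference quotient into a norm--variation term plus the classical term, the same majorisation of the first by $\overline{\partial}_{t}(|\cdot|_{\chi(t)})$ times a factor tending to $1$, and the same conclusion $\nu_{\chi(t)}(A)\le\mu_{\chi(t)}(A)+\overline{\partial}_{t}(|\cdot|_{\chi(t)})$. Nothing to add there.

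Part (i), however, contains a genuine gap, and you have in fact located it yourself: right regularity is a one-sided, supremum-based hypothesis, so it only yields $\overline{\lim}_{h\to 0^{+}}|x|_{\chi(t+h)}\le|x|_{\chi(t)}$ for each $x$. The ``symmetrisation'' you sketch does not close this: the companion estimate $|x|_{\chi(t+h)}\ge(1-O(h))|x|_{\chi(t)}$ cannot be squeezed out of the hypothesis. Concretely (for $n\ge 2$), take $|\cdot|_{\chi(t)}=|\cdot|_{2}$ and, for $h>0$, $|x|_{\chi(t+h)}^{2}=x_{1}^{2}+c(h)^{2}(x_{2}^{2}+\cdots+x_{n}^{2})$ with $c(h)$ oscillating between $\tfrac{1}{2}$ and $1$ as $h\to 0^{+}$; each of these is a norm, and $\sup_{|x|_{2}=1}(|x|_{\chi(t+h)}-1)=0$ for every $h$, so $\overline{\partial}_{t}(|\cdot|_{\chi(t)})=0$ exists, yet $|e_{2}|_{\chi(t+h)}=c(h)$ has no right limit. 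Thus right regularity as defined does not imply right continuity; one needs either two-sided control (e.g.\ finiteness of the analogous infimum/liminf quantity) or right continuity imposed separately, as the paper itself does in hypothesis $\mathcal H_{2}$. For what it is worth, the paper offers no argument here either --- it declares statement (i) ``straightforward from the definitions'' --- so your attempt has exposed a real weakness of the proposition rather than merely failed to reproduce a proof; but as written your part (i) is not a proof.
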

\begin{proof}
The first statement is straightforward from the definitions of $\overline{\partial}_{t} (|\cdot|_{\chi(t)})$ and right continuity of $|\cdot|_{\chi(t)}$.

The quotient term of the definition of $\nu_{\chi(t)}(A)$ gives
\begin{eqnarray*}
&&\frac{1}{h}\sup_{|x|_{\chi(t)}=1}\left[|(I_{n}+hA)x|_{\chi(t+h)}-|(I_{n}+hA)x|_{\chi(t)}\right.\\
&&\left.+
|(I_{n}+hA)x|_{\chi(t)}-1\right]\\
&\le&\frac{1}{h}\sup_{|x|_{\chi(t)}=1}\left[|(I_{n}+hA)x|_{\chi(t+h)}-|(I_{n}+hA)x|_{\chi(t)}\right]
\\
&&+\frac{1}{h}\sup_{|x|_{\chi(t)}=1}\left[|(I_{n}+hA)x|_{\chi(t)}-1\right]\\
&\le&\frac{1}{h}\left[h\overline{\partial}_{t} (|\cdot|_{\chi(t)})|(I_{n}+hA)|_{\chi(t)}
+\mu_{\chi(t)}(A)h+o(h)\right]
\end{eqnarray*}
where $o(h)$ is an infinitesimal term with $\lim_{h\to 0}o(h)/h=0$. So, $
\nu_{\chi(t)}(A)\le \mu_{\chi(t)}(A)+\overline{\partial}_{t} (|\cdot|_{\chi(t)})$ (statement (ii)).
\end{proof}

Using the definition of switched matrix measure, we extend Coppel inequality \citep{Coppel} as follows.

\begin{lemma}
Suppose that $|\cdot|_{\chi(t)}$ is right-regular in $[t_{1},t_{2}]$. Consider the following time-varying Carath\'e{o}dory dynamical system $
\dot{x}(t)=A(t)x(t)$
with some PWC matrix-valued function $A(t)\in\R^{n,n}$ and $x(t)\in\R^{n}$. If $\nu_{\chi(t)}(A(t))\le \alpha(t)$ for some measurable function $\alpha(t)$ for $t\in[t_{1},t_{2}]$, then
\begin{eqnarray}
|x(t_{2})|_{\chi(t_{2})}\le\exp\left(\int_{t_{1}}^{t_{2}}\alpha(t)dt\right)|x(t_{1})|_{\chi(t_{1})}.
\label{coppel}
\end{eqnarray}
\end{lemma}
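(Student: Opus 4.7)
The plan is to reduce the multi-norm Coppel inequality to a scalar Gronwall-type argument on $v(t) := |x(t)|_{\chi(t)}$. Specifically, I would first establish that the upper right Dini derivative satisfies $D^{+}v(t)\le \alpha(t)\,v(t)$ on $[t_{1},t_{2}]$, and then integrate this scalar inequality.

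For the Dini estimate, expand the Carath\'{e}odory trajectory from the right as $x(t+h) = (I_{n}+hA(t))\,x(t) + o(h)$, which is legitimate since $A(\cdot)$ is piecewise continuous with right limits. Taking the $|\cdot|_{\chi(t+h)}$-norm, using the triangle inequality, and factoring $v(t)$ out through the unit vector $u := x(t)/v(t)$ when $v(t)\ne 0$ (the case $v(t)=0$ forces $x\equiv 0$ by uniqueness, so the claim is trivial), I obtain
\[
v(t+h)\le v(t)\Bigl(1 + \sup_{|w|_{\chi(t)}=1}\bigl[|(I_{n}+hA(t))w|_{\chi(t+h)}-1\bigr]\Bigr) + o(h).
\]
Dividing by $h$, taking $\overline{\lim_{h\to 0+}}$, and invoking definition (\ref{nu}) of $\nu_{\chi(t)}$ together with the hypothesis $\nu_{\chi(t)}(A(t))\le \alpha(t)$ yields $D^{+}v(t)\le \alpha(t)\,v(t)$.

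The integration step is standard: setting $\eta(t):=v(t)\exp\bigl(-\int_{t_{1}}^{t}\alpha(s)\,ds\bigr)$ and applying the chain rule for upper right Dini derivatives gives $D^{+}\eta(t)\le 0$, so $\eta$ is non-increasing on $[t_{1},t_{2}]$; evaluating at $t=t_{2}$ and rearranging reproduces (\ref{coppel}).

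The main obstacle I anticipate lies in the Dini estimate, specifically in controlling the $o(h)$ remainder uniformly as the norm index drifts from $\chi(t)$ to $\chi(t+h)$: this requires local equivalence of $|\cdot|_{\chi(t+h)}$ and $|\cdot|_{\chi(t)}$ for small $h>0$, which should be supplied by right-regularity through $\overline{\partial}_{t}(|\cdot|_{\chi(t)})$ (as already exploited in the preceding proposition bounding $\nu$ by $\mu + \overline{\partial}_{t}$). A secondary subtle point is that $v$ may only be right-continuous on $[t_{1},t_{2}]$ (left jumps can occur at switching instants of $\chi$), so the scalar Gronwall passage must be invoked in its version for right-continuous functions with a.e. Dini-derivative bound rather than the classical $C^{1}$ version — a mild but essential generalization.
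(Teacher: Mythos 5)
Your proposal follows essentially the same route as the paper's own proof: expand $x(t+h)=(I_n+hA(t))x(t)+o(h)$ using piecewise continuity of $A$, bound the Dini derivative of $v(t)=|x(t)|_{\chi(t)}$ by $\nu_{\chi(t)}(A(t))\,v(t)\le\alpha(t)\,v(t)$ almost everywhere, and integrate the resulting scalar differential inequality. Your two cautionary remarks (uniform control of the $o(h)$ term as the norm index drifts, and the Gronwall step for merely right-continuous $v$) are points the paper glosses over, but they do not change the argument, which is correct.
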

\begin{proof}
For any $t\in(t_{1},t_{2})$ except discontinuous points of $A(t)$, consider the following quotient
\begin{eqnarray*}
I(h)&=&\frac{1}{h}[|x(t+h)|_{\chi(t+h)}-|x(t)|_{\chi(t)}]\\
&=&\frac{1}{h}[|x(t)+\int_{t}^{t+h}A(s)x(s)ds|_{\chi(t+h)}-|x(t)|_{\chi(t)}]\\
&=&\frac{1}{h}\left\{|(I_{n}+hA(t))x(t)+\int_{t}^{t+h}[A(s)x(s)\right.\\
&&\left.-A(t)x(t)]ds
|_{\chi(t+h)}-|x(t)|_{\chi(t)}\right\}\\
&\le&\frac{1}{h}[|(I_{n}+hA(t))x(t)|_{\chi(t+h)}-|x(t)|_{\chi(t)}+o(h)],
\end{eqnarray*}
which implies that $\overline{\lim_{h\to 0}}I(h)\le\nu_{\chi(t)}(A(t))|x(t)|_{\chi(t)}$. That is,
$D^{+}|x(t)|_{\chi(t)}\le \nu_{\chi(t)}(A(t))|x(t)|_{\chi(t)}\le\alpha(t)|x(t)|_{\chi(t)}$ holds for almost every $t\in[t_{1},t_{2}]$, where $D^{+}$ stands for the Dini derivative. Thus, Ineq. (\ref{coppel}) holds, noting that $x(t)$ is continuous with respect to $t$.
\end{proof}

We make the following hypothesis on the multiple norm.

\begin{itemize}
\item[$\mathcal H_{2}$:] $|\cdot|_{\chi(t)}$ is right-regular everywhere but at the time instants $\{\tilde{t}_{j}\}$, it is  right-continuous and uniformly equivalent, and its left-limit exists at each $\tilde{t}_{j}$.
\end{itemize}
Thus, we denote by $\tilde{N}(t,s)=\#\{i:~s<\tilde{t}_{i}<t\}$, the number of time instants $\tilde{t}_{i}$  in the time interval $(s,t)$; obviously, $\tilde{N}(t)=\tilde{N}(t,t_{0})$. Here, $\#$ stands for the cardinality of a set.
In addition, we make the following assumption.
\begin{itemize}
\item[${\mathcal H}_{3}$:]
There exists a $\kappa$-reachable set \citep{Rus_diB_Son_10} $C\subset\R^{n}$ which is a forward-invariant for (\ref{ds1}).
\end{itemize}
Then, we have a general result on $\delta EAS$ of switched dynamical system (\ref{ds1}) by contraction analysis in multiple norms.
\begin{theorem}\label{thm0}
Suppose that hypotheses $\mathcal H_{1,2,3}$ hold and that $r(t)$ is PWC.
If there exist a measurable function $\alpha(t)$, nonnegative constants $\beta_{j}$, $j=1,2,\cdots$, $c>0$, and $T_{0}>0$, such that, for all $x\in C$, the following conditions hold
\begin{eqnarray}
&&\nu_{\chi(t)}\left(\frac{\partial f}{\partial x}(x,r(t))\right)\le \alpha(t),\forall~t\ne \tilde{t}_{j},~x\in C\label{mm}\\
&&|\cdot|_{\chi(\tilde{t}_{j})}\le \beta_{j}|\cdot|_{\chi(\tilde{t}_{j}-)},~\forall~j\label{eq}
\end{eqnarray}
and, for all $T>T_0$,
\begin{equation}
\frac{1}{T}\left[\int_{t_{0}}^{T+t_{0}}\alpha(t)dt+\sum_{j=1}^{\tilde{N}(t_{0}+T)}\log(\beta_{j})\right]<-c,
\label{cond_thm0}
\end{equation}
then system (\ref{ds1}) is $\delta EAS$ in $C$ with respect to $r(t)$; if (\ref{cond_thm0}) holds for some $c>0$ independent of $t_{0}$, then (\ref{ds1}) is $\delta EUAS$ with respect to $r(t)$ in $C$. In addition, the exponential convergence rate can be estimated as $O(\exp(-c(t-t_{0})))$.
\end{theorem}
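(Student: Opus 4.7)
The plan is to apply the generalized Coppel inequality just proved to a variational (displacement) vector field along a family of solutions parameterized by a curve in the $\kappa$-reachable set $C$, and then chain the estimates across the finitely many discontinuity instants $\tilde t_j$ of the switched norm using the jump condition (\ref{eq}).

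Fix $x_0,y_0\in C$ and $t_0$. By $\mathcal H_3$ there is a $C^1$ curve $\gamma:[0,1]\to C$ with $\gamma(0)=x_0$, $\gamma(1)=y_0$, and $|\gamma'(s)|_{\chi(t_0)}\le\kappa|x_0-y_0|_{\chi(t_0)}$. Forward-invariance of $C$ and $\mathcal H_1$ let me define $\gamma(s,t):=\phi(t;t_0,\gamma(s),r_t)\in C$, and the $s$-derivative $w(s,t):=\partial\gamma/\partial s$ satisfies the variational equation
\begin{equation*}
\dot w(s,t)=\frac{\partial f}{\partial x}(\gamma(s,t),r(t))\,w(s,t),\quad w(s,t_0)=\gamma'(s),
\end{equation*}
which is a linear Carath\'eodory system in $t$ for each fixed $s$.

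Partition $[t_0,t]$ at the discontinuity times $\tilde t_j$ of $\chi(\cdot)$. On each open subinterval the norm is right-regular, so the lemma applied to the variational equation with the pointwise bound $\alpha(t)$ from (\ref{mm}) gives
\begin{equation*}
|w(s,\tilde t_{j+1}-)|_{\chi(\tilde t_{j+1}-)}\le\exp\!\left(\int_{\tilde t_{j}}^{\tilde t_{j+1}}\alpha(\tau)\,d\tau\right)|w(s,\tilde t_{j})|_{\chi(\tilde t_{j})}.
\end{equation*}
Since the trajectory itself is continuous in $t$ across each $\tilde t_j$, the jump condition (\ref{eq}) gives $|w(s,\tilde t_j)|_{\chi(\tilde t_j)}\le\beta_j|w(s,\tilde t_j-)|_{\chi(\tilde t_j-)}$. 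Chaining across the $\tilde N(t,t_0)$ discontinuities yields
\begin{equation*}
|w(s,t)|_{\chi(t)}\le\exp\!\left(\int_{t_0}^{t}\alpha(\tau)\,d\tau+\sum_{j=1}^{\tilde N(t,t_0)}\log\beta_j\right)|\gamma'(s)|_{\chi(t_0)}.
\end{equation*}

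Integrating along $s$ and using $x(t)-y(t)=\int_0^1 w(s,t)\,ds$ together with the $\kappa$-reachability bound delivers
\begin{equation*}
|x(t)-y(t)|_{\chi(t)}\le\kappa\exp\!\left(\int_{t_0}^{t}\alpha(\tau)\,d\tau+\sum_{j=1}^{\tilde N(t,t_0)}\log\beta_j\right)|x_0-y_0|_{\chi(t_0)}.
\end{equation*}
For $T:=t-t_0>T_0$, hypothesis (\ref{cond_thm0}) bounds the exponent by $-cT$, while uniform equivalence from $\mathcal H_2$ transfers the estimate to any fixed reference norm $|\cdot|$ with a multiplicative constant; the short window $t-t_0\in[0,T_0]$ is absorbed into that constant, since $\alpha$ is locally integrable on $[t_0,t_0+T_0]$ (otherwise (\ref{cond_thm0}) would fail) and only finitely many $\beta_j$'s appear there. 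This produces a class $\mathcal{EKL}$ bound, establishing $\delta$EAS, and, if $c$ is independent of $t_0$, $\delta$UEAS.

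The main obstacle is the bookkeeping at the $\tilde t_j$: one must verify that the left-limit of $w(s,t)$ in the norm $|\cdot|_{\chi(\tilde t_j-)}$ exists (it does by continuity of the trajectory together with right-regularity on the preceding subinterval, so the lemma applies up to the left endpoint), and that the transaction factor $\beta_j$ in (\ref{eq}) is the correct multiplier across each jump. Everything else is a mechanical combination of the Coppel-type lemma on each right-regular subinterval with the classical curve-length contraction argument.
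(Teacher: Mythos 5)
Your proposal is correct and follows essentially the same route as the paper's own proof: parameterize a curve in the $\kappa$-reachable set, bound the variational solution $w$ via the extended Coppel inequality on each subinterval between the norm-discontinuity instants $\tilde t_j$, chain across those instants with the transaction factors $\beta_j$, then integrate over $s$ and invoke uniform equivalence. Your explicit handling of the initial window $t-t_0\in[0,T_0]$ is a small refinement the paper leaves implicit, but the argument is otherwise identical.
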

\begin{proof}
For any $x_{0},y_{0}\in C$, $C$ being $\mathcal \kappa$-reachable for some $\kappa>0$ implies, from Definition \ref{kappa}, that there exists  a continuously differentiable curve $\gamma(s):[0,1]\to C$ such that $\gamma(0)=x_{0}$ and $\gamma(1)=y_{0}$, and $
|\gamma'(s)|_{\chi(t_{0})}\le \kappa|x_{0}-y_{0}|_{\chi(t_{0})}$, for all $s\in[0,1]$.

Let $\psi(t,s)=\phi(t;t_{0},\gamma(s),r_{t})$, $s\in[0,1]$, be the solution of (\ref{ds1}) with initial value $\psi(t_{0},s)=\gamma(s)$. Since $f(x,r(t))$ is continuous with respect to $(x,t)$ except for the switching time points $\{t_{j}\}$ and continuously differentiable with respect to $x$, then $\phi(t;t_{0},x_{0},r_{t})$ is continuously differentiable with respect to the initial value $x_{0}$. Let $x_{0}=\gamma(s)$. Then, $w=\frac{\partial \psi}{\partial s}$ is well defined and continuous. By the same algebraic manipulations first presented by \cite{Rus_diB_Son_10}, except for $\{t_{j}\}$, $w(t,s)$ is the Carath\'e{o}dory solution of:
\begin{eqnarray}
\begin{cases}\dot{w}=\frac{\partial f}{\partial \psi}(\psi(t),r(t))w\\
w(t_{0},s)=\gamma'(s)\end{cases}\label{var}
\end{eqnarray}

Consider the sequence $|w(\tilde{t}_{j},s)|_{\chi(\tilde{t}_{j})}$ of Eq. (\ref{var}). For all $s\in[0,1]$, by the extended Coppel inequality (\ref{coppel}) and condition (\ref{mm}), we have
\begin{eqnarray*}
|w(\tilde{t}_{j},s)|_{\chi(\tilde{t}_{j}-)}\le \exp\left[\int_{\tilde{t}_{j-1}}^{\tilde{t}_{i}}\alpha(t)dt\right]
|w(\tilde{t}_{j-1},s)|_{\chi(\tilde{t}_{j-1})}.
\end{eqnarray*}
Using inequality (\ref{eq}) between  $|\cdot|_{\chi(\tilde{t}_{j-1})}$ and $|\cdot|_{\chi(\tilde{t}_{j})}$, gives
\begin{eqnarray*}
&&|w(\tilde{t}_{j},s)|_{\chi(\tilde{t}_{j})}\le\beta_{j}|w(\tilde{t}_{j},s)|_{\chi(\tilde{t}_{j}-)}\\
&\le&\beta_{j}
\exp\left[\int_{\tilde{t}_{j-1}}^{\tilde{t}_{i}}\alpha(t)dt\right]
|w(\tilde{t}_{j-1},s)|_{\chi(\tilde{t}_{j-1})}
\end{eqnarray*}
Then, iterating down to $i<j$, we have
\begin{eqnarray*}
|w(\tilde{t}_{j},s)|_{\chi(\tilde{t}_{j})}&\le&\prod_{k=i+1}^{j}\beta_{k}
\exp\left[\int_{\tilde{t}_{i}}^{\tilde{t}_{i}}\alpha(t)dt\right]
|w(\tilde{t}_{i},s)|_{\chi(\tilde{t}_{i})}.
\end{eqnarray*}
For any $t$, noting that from definition $\tilde{t}_{\tilde{N}(t)+1}\ge t>\tilde{t}_{\tilde{N}(t)}$,  we have
\begin{eqnarray*}
&&|w(t,s)|_{\chi(t)}\le\exp\left[\int_{\tilde{t}_{\tilde{N}(t)}}^{t}\alpha(\tau)d\tau\right]
|w(\tilde{t}_{\tilde{N}(t)},s)|_{\chi(\tilde{t}_{\tilde{N}(t)})}\\
&&\le\exp\left[\int_{\tilde{t}_{\tilde{N}(t)}}^{t}\alpha(\tau)d\tau\right]
\cdot\exp\left(\sum_{i=1}^{\tilde{N}(t)}\log\beta_{i}\right.\\
&&\left.+\int_{t_{0}}^{\tilde{t}_{\tilde{N}(t)}}\alpha(\tau)d\tau\right)
|w(t_{0},s)|_{\chi(t_{0})}.
\end{eqnarray*}
Hence, we obtain
\begin{eqnarray}
|w(t,s)|_{\chi(t)}\le \exp(-c (t-t_{0}))|\gamma^\prime(s)|_{\chi(t_{0})},\label{ineq0.1}
\end{eqnarray}
for all $t>t_{0}+T_{0}$. Since $c$ is independent of $s$, $\delta EAS$ can be derived by following similar arguments as presented by \cite{Rus_diB_Son_10}. In detail, since $|\cdot|_{\chi(t)}$ is uniformly equivalent, there exists $D>0$ such that $|x|_{\chi(t)}\le D |x|_{\chi(t')}$ for any $x\in\R^{n}$ and $t,t'\ge t_{0}$. Then, by (\ref{ineq0.1}), we have
\begin{eqnarray}
&&|\phi(t;x(0),r_{t})-\phi(t;y(0),r_{t})|_{\chi(t_{0})}\\
&\le& D |\phi(t;x(0),r_{t})-\phi(t;y(0),r_{t})|_{\chi(t)}\nonumber\\
&=&D\left|\int_{0}^{1}\frac{\partial \psi(t,s)}{\partial s}ds\right|_{\chi(t)}
\le D\int_{0}^{1}|w(t,s)|_{\chi(t)}ds\nonumber\\
&\le& D\int_{0}^{1}\exp(-c(t-t_{0}))|\gamma^\prime(s)|_{\chi(t_{0})}ds\nonumber\\
&=&D\kappa\exp(-c(t-t_{0}))|x_0-y_0|_{\chi(t_{0})}\label{cr}
\end{eqnarray}
which converges to zero exponentially. This proves that system (\ref{ds1}) is $\delta EAS$.

If (\ref{cond_thm0}) holds for some $c>0$ independently of $t_{0}$, then (\ref{ineq0.1}) holds independently of $t_{0}$ if $t-t_{0}>T_{0}$, which implies $\lim_{t\to\infty}|w(t)|_{\chi(t_{0})}=0$ is uniform with respect to $t_{0}$. By the arguments above, system (\ref{ds1}) is $\delta UEAS$. In addition, inequality (\ref{cr}) implies that the exponential convergence rate can be estimated as $O(\exp(-c(t-t_{0})))$.
\end{proof}

Note that $c$ is an estimate of the exponential convergence rate and also an index of the average contraction rate of (\ref{ds1}).

As a simple example to illustrate this result, define a switched matrix measure based on the following multiple norms: $
|x|_{P(t)}=\sqrt{x^{\top}P(t)x}$.
Here, $P(t)\in\R^{n,n}$ is a differentiable matrix-value function where each $P(t)$ is a symmetric positive definite matrix such that its largest eigenvalue $\lambda_{\max}(P(t))$  the smallest eigenvalue $\lambda_{\min}(P(t))$ is upper bounded and lower-bounded positive respectively. Then, for any $A\in\R^{n,n}$, by simple algebraic manipulations, it can be shown that the induced switched matrix measure is $
\nu_{P(t)}(A)=\frac{1}{2}
\lambda_{\max}[P^{-1/2}(t)(P(t)A+A^{\top}P(t)+\dot{P}(t))P^{-1/2}(t)]$.

Now, consider the linear time-varying (LTV) system $
\dot{x}=A(t)x+I(t)$
with some PWC matrix function $A(t)$. Then, (\ref{cond_thm0}) in Theorem \ref{thm0} can be fulfilled by assuming that it holds that $
P(t)A(t)+A(t)^{\top}P(t)+\dot{P}(t)<0$
for almost every $t$. A similar condition can also be obtained by using the theory of convergent systems \citep{PaPo:07}.

\begin{remark}
Analogous  arguments to those presented by \cite{Loh_Slo_98} and \cite{Rus_diB_Son_10} can be followed to prove that the contracting system converges towards a periodic solution if $r(t)$ is periodic. Indeed, in this case the time instants where $r(t)$ switches, say $\{t_{j}\}$, are also periodic with period $T$ and so is the function $N(t,s)=N(t+T,s+T)$ for all $t>s$. Then, inequality (\ref{cond_thm0}) can be fulfilled by assuming that
\begin{equation}
\frac{1}{T}\left[\int_{t_{0}}^{T+t_{0}}\alpha(t)dt
+\sum_{j=1}^{J}\log(\beta_{j})\right]<-c.
\label{cond_thm2}
\end{equation}
where $J$ is the number of switches in one period. (Note that this inequality holds independently of the initial time $t_{0}$, due to the periodicity.) From Theorem \ref{thm0}, one can conclude that system (\ref{ds1}) is $\delta EAS$. The existence of a periodic solution of (\ref{ds1}) follows by using the contraction mapping theorem.
\end{remark}

\section{Contraction analysis of staircase switching and transaction coefficients between norms}
As a special case and an important application, in this section, we assume that the switching signal $r(t)$ of the switched system (\ref{ds1}) is a staircase function:
\begin{itemize}
\item[${\mathcal H_{4}}$:]$
r(t)=\xi_{i},\quad t_{i}\le t< t_{i+1}$, $i=0,1,\cdots$,
with an increasing time point sequence $\{t_{i}\}$ with $t_{i}=0$ and $\lim_{i\to\infty}t_{i}=\infty$, where $\xi_{i}\in\Xi$ where $\Xi$ is a countable set.
\end{itemize}
Let $\Delta_{i}=t_{i+1}-t_{i}>0$ be the interval between two points, and $N(t,s)=\#\{i:~s<t_{i}<t\}$, be the number of time instants when $r(t)$ switches that fall in the time interval $[s,t]$, in particular, $N(t)=N(t,t_{0})$.

By using multiple norms $|\cdot|_{\chi(t)}$ with $\chi(t)=r(t)$, we obtain the following result as a consequence of Theorem \ref{thm0}.
\begin{corollary}\label{thm1}
Suppose that hypotheses $\mathcal H_{1,2,3,4}$ hold. If there exists constants $\alpha_{i}$, nonnegative constants $\beta_{i}$, $i=1,2,\cdots$, $c>0$ and $T_{0}>0$ such that, for all $i$ and $x\in C$, the following conditions hold
\begin{eqnarray}
\mu_{\xi_{j}}\left(\frac{\partial f}{\partial x}(x,\xi_{j})\right)\le \alpha_{j},\quad |\cdot|_{\xi_{j+1}}\le \beta_{j}|\cdot|_{\xi_{j}},~\forall~j\label{eq1}
\end{eqnarray}
and, for all $T>T_0$,
\begin{eqnarray}
&&\frac{1}{T}\bigg\{\sum_{i=0}^{N(t_{0}+T)}\left[\alpha_{i}\Delta_{i}+\log(\beta_{i})\right]
\nonumber\\
&&+\alpha_{N(t_{0}+T)+1}[t_{0}+T-t_{N(t_{0}+T)}]\bigg\}<-c,
\label{cond_thm1}
\end{eqnarray}
then (\ref{ds1}) is $\delta EAS$ with respect to $r(t)$ in $C$; if (\ref{cond_thm1}) holds for some $c>0$ independently of $t_{0}$, then (\ref{ds1}) is $\delta UEAS$ with respect to $r(t)$ in $C$. Moreover, the exponential convergence rate is estimated as $O(\exp(-c(t-t_{0})))$.
\end{corollary}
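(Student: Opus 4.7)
The plan is to deduce the corollary directly from Theorem \ref{thm0} by specializing the multiple norm $|\cdot|_{\chi(t)}$ to the piecewise-constant choice $\chi(t)=r(t)=\xi_i$ on $[t_i,t_{i+1})$ dictated by the staircase switching signal under hypothesis $\mathcal{H}_4$. The first step is to verify that this choice of $\chi(t)$ satisfies $\mathcal{H}_2$: on each open interval $(t_i,t_{i+1})$ the norm is constant, hence $\overline{\partial}_t(|\cdot|_{\chi(t)})=0$ and $\nu_{\chi(t)}(\cdot)$ reduces to the ordinary matrix measure $\mu_{\xi_i}(\cdot)$, as already noted after the definition of the switched matrix measure; at each $t_i$ the norm is right-continuous by construction of the staircase signal and the left limit $|\cdot|_{\chi(t_i-)}=|\cdot|_{\xi_{i-1}}$ exists; uniform equivalence on the countable family $\{\xi_i\}$ is subsumed in $\mathcal{H}_2$ so the set of exceptional instants is exactly $\{\tilde{t}_j\}=\{t_j\}$.

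The second step is to translate the hypotheses of the corollary into those of Theorem \ref{thm0}. Define the measurable function $\alpha(t):=\alpha_i$ for $t\in[t_i,t_{i+1})$; then the first inequality in (\ref{eq1}) is precisely condition (\ref{mm}). The second inequality in (\ref{eq1}) gives the transaction bound $|\cdot|_{\xi_{j+1}}\le \beta_j |\cdot|_{\xi_j}$ at each switching instant $\tilde{t}_{j+1}=t_{j+1}$, which is exactly (\ref{eq}). Since $C$ is $\kappa$-reachable and forward-invariant by $\mathcal{H}_3$, all structural hypotheses of Theorem \ref{thm0} are in place.

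The third step is to evaluate the averaged bound (\ref{cond_thm0}) with these choices. Because $\alpha(t)$ is constant on each subinterval, for any $T>T_0$ we can split
\begin{equation*}
\int_{t_0}^{t_0+T}\alpha(t)\,dt=\sum_{i=0}^{N(t_0+T)}\alpha_i\Delta_i+\alpha_{N(t_0+T)+1}\bigl[t_0+T-t_{N(t_0+T)}\bigr],
\end{equation*}
(with the obvious convention for the first, possibly partial, interval), while $\tilde{N}(t_0+T)=N(t_0+T)$ by the identification of switching instants. Substituting into (\ref{cond_thm0}) reproduces exactly the averaged inequality (\ref{cond_thm1}). Hence Theorem \ref{thm0} applies and yields $\delta EAS$ of (\ref{ds1}) with respect to $r(t)$ in $C$, together with the exponential convergence rate $O(\exp(-c(t-t_0)))$; when (\ref{cond_thm1}) holds with $c$ independent of $t_0$, the same application yields $\delta UEAS$.

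The routine part is the algebraic translation of conditions; the only mildly delicate point I expect is bookkeeping at the endpoints of the time window $[t_0,t_0+T]$ (i.e.\ the possibly partial first and last subintervals and the exact definition of $N(t_0+T)$), since Theorem \ref{thm0} expresses its hypothesis as an integral plus a sum over switching instants while the corollary uses a sum indexed by the modes. Once these edge terms are matched, no additional analytic step is required.
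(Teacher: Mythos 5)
Your proposal is correct and follows essentially the same route as the paper: set $\chi(t)=r(t)$, define the piecewise-constant $\alpha(t)=\alpha_i$ on $[t_i,t_{i+1})$ so that $\nu_{\chi(t)}$ reduces to $\mu_{\xi_i}$ on each interval, match the integral-plus-sum condition (\ref{cond_thm0}) with the mode-indexed sum (\ref{cond_thm1}), and invoke Theorem \ref{thm0}. Your explicit verification that the staircase choice of norm satisfies $\mathcal{H}_{2}$ and your remark on the endpoint bookkeeping are slightly more careful than the paper's own (terse) argument, but they do not constitute a different approach.
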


\begin{proof}
Let $\alpha(t)=\alpha_{i}$ if $t\in[t_{i},t_{i+1})$. We have
\begin{eqnarray*}
\nu_{r(t)}\left(\frac{\partial f}{\partial x}(x,\xi_{i})\right)=\mu_{r(t)}\left(\frac{\partial f}{\partial x}(x,\xi_{i})\right)\le\alpha(t),~\forall~t\ne t_{i}.
\end{eqnarray*}
Note that
\begin{eqnarray*}
\frac{1}{T}\int_{t_{0}}^{T}\alpha(t)dt&=&\frac{1}{T}\bigg[\alpha_{N(t_{0}+T)+1}(t_{0}+T-t_{N(t_{0}+T)})
+\alpha_{0}\Delta_{0}\\
&&+\sum_{i=0}^{N(t_{0}+T)}\alpha_{i}\Delta_{i}\bigg].
\end{eqnarray*}
Under condition (\ref{cond_thm1}), one can conclude that there exists $T_{1}>0$ such that
\begin{eqnarray}
\frac{1}{T}\left[\int_{t_{0}}^{t_{0}+T}\alpha(t)dt+\sum_{i=0}^{N(t_{0}+T)}\beta_{i}\right]<-c
\label{thm0_recalling}
\end{eqnarray}
for all $T>T_{1}$. By employing Theorem \ref{thm0}, $\delta EAS$ of (\ref{ds1}) can be proved. Furthermore, if (\ref{cond_thm1}) holds independently of the initial time $t_{0}$, and therefore (\ref{thm0_recalling}) holds independently of $t_{0}$, then (\ref{ds1}) is $\delta UEAS$ with respect to $r(t)$ in $C$. The exponential convergence rate can be derived as done in the proof of Theorem \ref{thm0}.
\end{proof}

We can exploit Corollary \ref{thm1} when considering the case where $r(t)$ ($\xi_{i}$) takes values in a finite set $\Omega=\{\onetoK\}$. Specifically, define for any $t>s\ge 0$
\begin{eqnarray*}
\mathcal T_{k}(s,t)&=&\{\tau\in[s,t]:~r(\tau)=k\},\\
\mathcal N_{kl}(s,t)&=&\#\{i:~r(t_{i}-)=k,~{\rm and~}r(t_{i}+)=l,~s\le t_{i}\le t\},
\end{eqnarray*}
and constants $\alpha_{k}$ and $\beta_{kl}>0$, $k,l=\onetoK$ such that
\begin{eqnarray}
\mu_{k}\left(\frac{\partial f}{\partial x}(x,k)\right)\le \alpha_{k},~|\cdot|_{l}\le\beta_{kl}|\cdot|_{k}.\label{ineq1_cor1}
\end{eqnarray}
Then letting $\alpha(t)=\alpha_{i}$ when $t\in[t_{i},t_{i+1})$, it can be seen that inequality (\ref{cond_thm1}) in Corollary \ref{thm1} can be derived if the following condition holds for all $T>T_0$:
\begin{eqnarray}
&&\frac{1}{T}\sum_{k=1}^{K}\left[\alpha_{k}\mathcal T_{k}\left(t_{0},t_{0}+T\right)+\sum_{l=1}^{K}\log(\beta_{kl})\mathcal N_{kl}\left(t_{0},t_{0}+T\right)\right]\nonumber\\
&&\le -c.\label{cond_cor1}
\end{eqnarray}
Alternatively, condition (\ref{cond_cor1}) can be replaced by the assumption that there exists some $T_{1}>0$ such that for each $T_{1}$-length interval, $[nT_{1},(n+1)T_{1}]$,  it holds that
\begin{eqnarray}
&&\frac{1}{T_{1}}\sum_{k=1}^{K}\bigg[\alpha_{k}\mathcal T_{k}\left(nT_{1},(n+1)T_{1}\right)+\nonumber\\
&&\sum_{l=1}^{K}\log(\beta_{kl})\mathcal N_{kl}\left(nT_{1},(n+1)T_{1}\right)\bigg]\le -c,~\forall~n\ge 0.\label{cond_cor12}
\end{eqnarray}

\begin{remark}
From Ineq. (\ref{cond_cor12}), in the case that all $\alpha_{k}<0$, $k=\onetoK$, which implies that all subsystems are contracting and hence incrementally stable, the switched system is incrementally stable if the duration of the mode in which each subsystem is active is sufficiently long.
\end{remark}
More specifically, if all norms $|\cdot|_{k}$ are identical -- and simply denoted by $|\cdot|$ -- then $\beta_{kl}=1$, and inequality (\ref{cond_thm1}) can be fulfilled by simply assuming that
$(1/T)\sum_{k=1}^{K}\alpha_{k}\T(t,t+T)\le-c$
holds for all $t>0$.
Thus, one can see that if $\alpha_{k}<0$ holds for all $k=\onetoK$ (with respect to the same norm), then (\ref{ds1}) is incrementally stable, which coincides with the results by \cite{RuDi:11}.

The conditions of Theorem \ref{thm0} and Corollary \ref{thm1} as well as (\ref{cond_cor1}) depend on two quantities: the matrix measures $\alpha_{k}$ of the Jacobian of the dynamical system for each constant value of $r(t)$, and the transaction coefficients $\beta_{kl}$ between norms. As system (\ref{ds1}) is defined in a finite dimensional Euclidean space, these vector norms are equivalent, i.e., there exist positive constants $\beta_{kl}$ such that $|x|_{l}\le\beta_{kl}|x|_{k}$ holds for all $x\in\R^{n}$. The role of such transaction coefficients will be further investigated below.

For illustration, consider the LTV system
\begin{equation}
\dot x(t)=A(r(t))x(t) + B\label{LTV}
\end{equation}
where $r(t):[0,\infty[ \mapsto C \equiv \{1,2\}$ with  $A=A(1)$ or $A(2)$ and $B\in\R^{2}$. We assume that the linear system is switched between two constant matrices periodically with identical frequency $\varphi_r$. Consider two vector norms $|\cdot|_{1,2}$, corresponding to matrix measures $\mu_{1,2}$ respectively, with transaction coefficients $\beta_{12}$ and $\beta_{21}$.
Corollary 1, specifically condition (\ref{cond_cor12}), yields that the following inequality is a sufficient conditions for $\delta$UEAS of system (\ref{LTV}):
\begin{eqnarray}
&&\frac{1}{2}\left[\mu_{1}(A(1))+\mu_{2}(A(2))+\varphi_r \cdot(\log\beta_{12}
+\log\beta_{21})\right]\nonumber\\
&&<-c\label{cond_LTV}
\end{eqnarray}
for some $c>0$.

We now discuss the relationship between different classes of vector norms in greater detail.

\noindent{\it\bf Quadratic norms.} The quadratic norm is defined as
$|x|_{P}=\sqrt{x^{\top}Px}$ for some positive definite matrix $P$. For another $|x|_{Q}=\sqrt{x^{\top}Qx}$  with symmetric positive definite matrix $Q$. Then, we have
\begin{eqnarray}
|x|_{Q}\le\sqrt{\lambda_{\max}(P^{-1/2}QP^{-1/2})}|x|_{P}\label{quad}
\end{eqnarray}
for all $x\in\R^{n}$, where $\lambda_{\max}(A)$ denotes the largest eigenvalue of a symmetric square matrix $A$.
The proof of this statement is straightforward and omitted here for the sake of brevity.

A a first example, we take
$
A(1)=\begin{pmatrix}
0 & -1\cr 2  & -3 \end{pmatrix}$, $A(2)=\begin{pmatrix}
0 & -11\cr 2  & -33 \end{pmatrix}$.
It is easy to see that, using the matrix measures $\mu_{i}(\cdot)$ induced by the quadratic weighted vector norm $|x|_{\Theta_i}:=|\Theta_i x|_2$, $i=1,2$ respectively, the matrix measures $\mu_{i}\left[A(i)\right]$ with
$$
\Theta_1= \begin{pmatrix}
0.707 & 0.447 \cr 0.707  & 0.894 \end{pmatrix}^{-1}, \quad
 \Theta_2= \begin{pmatrix}
0.998 & 0.322 \cr 0.0618  & 0.947 \end{pmatrix}^{-1}
$$
are negative for $i=1$ and $i=2$. In particular, $\mu_{1}\left[A(1)\right] \leq - 1:=\alpha_1$ and $\mu_{2}\left[A(t,2)\right] \leq -0.6807 :=\alpha_2$.

From the results reported in Sec. \ref{sec:norm}, we get from Ineq. (\ref{quad}) that:
$|x|_{\Theta_1} \leq 1.796|x|_{\Theta_2}$
and
$|x|_{\Theta_2} \leq 1.05 |x|_{\Theta_1}$;
therefore condition (\ref{eq}) remains satisfied with $\beta_{12}:=1.796$ and $\beta_{21}:=1.05$.
Assuming a switching frequency of $\varphi_r=1$ Hz between the two modes, we have $\Delta_i = 1~s$ for all $i$. Therefore (\ref{cond_LTV}) is satisfied and (\ref{LTV}) is incrementally stable in $\R^2$.

The evolution of the norm of the error for two trajectories starting at initial conditions $[0.5, 0.1]$ and $[0.4, 0.1]$ respectively is shown in Fig. \ref{fig:1} when $B=[0\ 0]$ and $B=[1\ 1]$ respectively. In both cases we observe incremental stability as expected. Note that the use of two different weighted norms makes proving incremental stability much simpler than if one common metric had to be used for both modes as required by previous results \cite[e.g.][]{RuDi:11,DiLi:14}.

\begin{figure}
\begin{center}
\includegraphics[width=0.45\textwidth,height=0.2\textwidth]{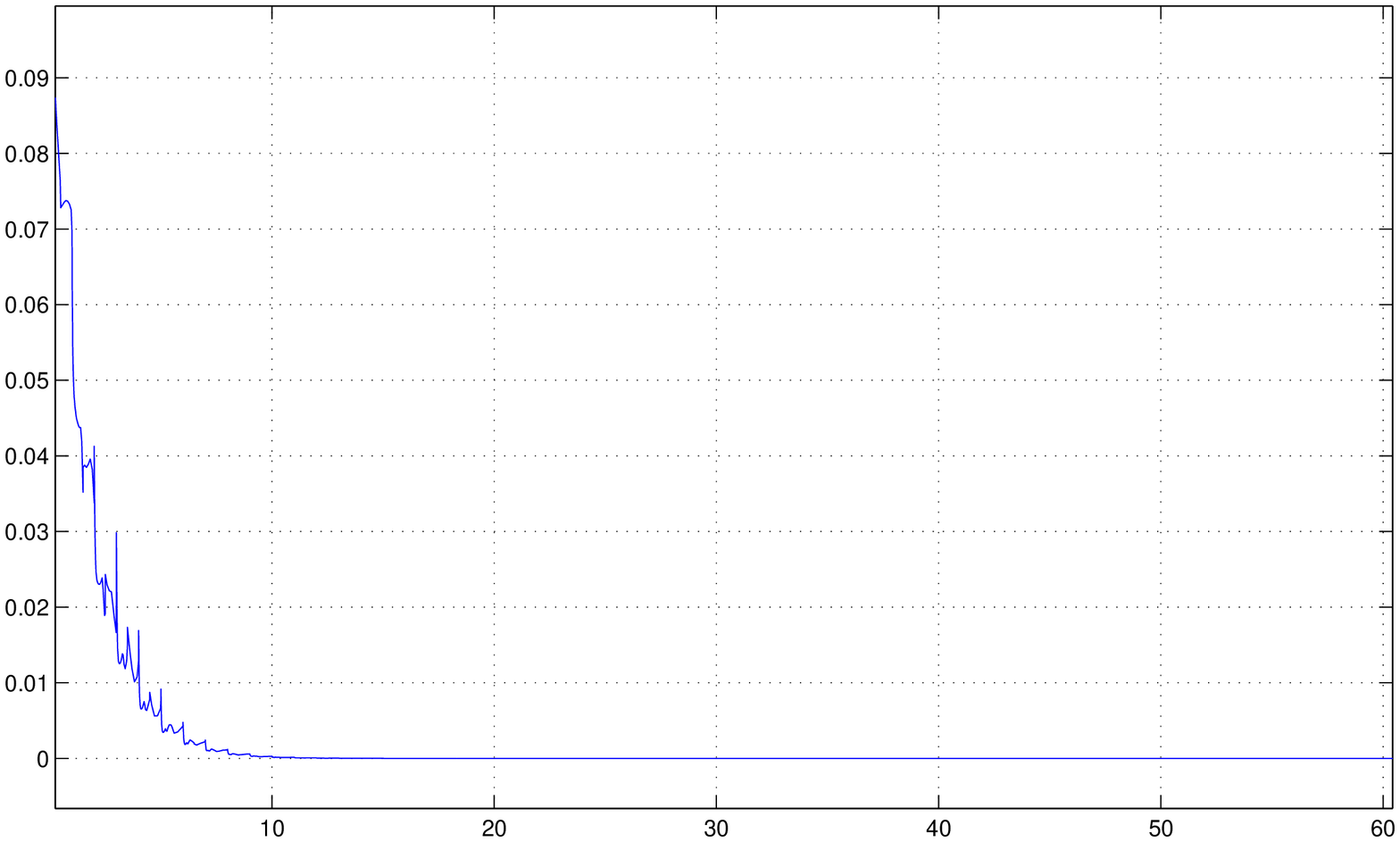}
\includegraphics[width=0.45\textwidth,height=0.2\textwidth]{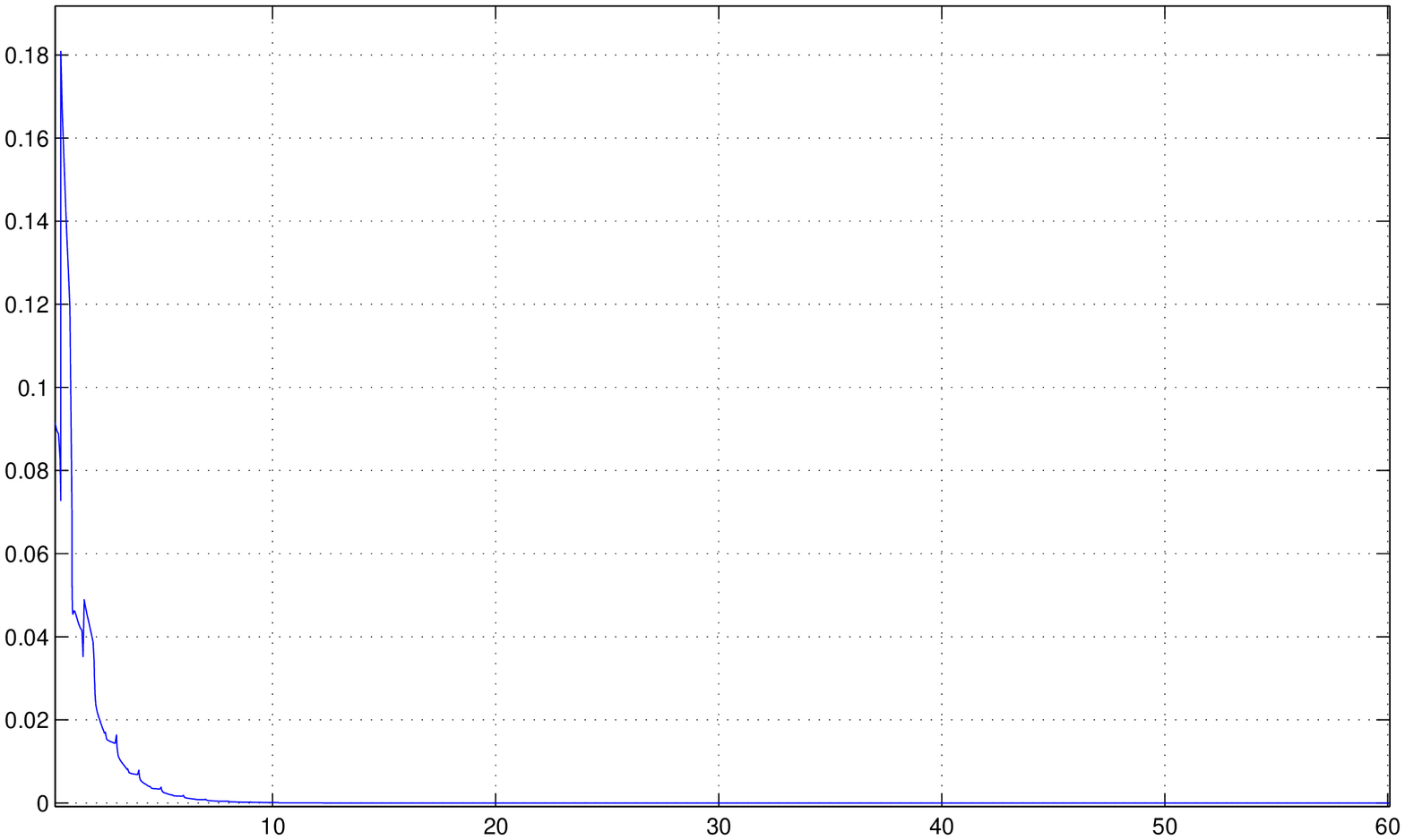}
\caption{Evolution of the Euclidean norm of the error $\|x(t)-y(t)\|$ when $B=[0\ 0]^{\top}$ (top panel) and $B=[1\ 1]^{\top}$ (bottom panel).}\label{fig:1}
\end{center}
\end{figure}


Next, we take $
A(1)=
\begin{pmatrix}
   -1.3481   & -2.9306\cr
    -2.4538  &  -1.2755\end{pmatrix}$, $A(2)=\begin{pmatrix}-11.2237  &  7.0628\cr
    -1.7413 &  1.5119 \end{pmatrix}$.
It can be seen that $A(1)$ is Huwitz stable i.e., with all eigenvalues having negative real parts, but $A(2)$ is unstable, i.e., with some eigenvalues possessing positive real parts. Choosing $\Theta_{1,2}$ as follows:
\begin{eqnarray*}
\Theta_{1}=\begin{pmatrix}
0.3797  &  0.0061\cr
   0.0061   &   0.4534\end{pmatrix} ,\quad \Theta_{2}=\begin{pmatrix}
0.0644 &   -0.1475\cr
      -0.1475   &  0.8267\end{pmatrix}.
\end{eqnarray*}
we find that the matrix measures, $\mu_{1}(A(1))$ and $\mu_{2}(A(2))$, induced by the $2$-norms  $|\cdot|_{\Theta_{1}}$ and $|\cdot|_{\Theta_{2}}$ are:
$\mu_{1}(A(1))=-2.6178$, $\mu_{2}(A(2))=0.9188$.
The transaction coefficients between the two norms $|\cdot|_{\Theta_{1,2}}$ can be calculated by (\ref{quad}) as $|\cdot|_{\Theta_{1}}\le \beta_{12}|\cdot|_{\Theta_{2}}$ and $|\cdot|_{\Theta_{2}}\le \beta_{21}|\cdot|_{\Theta_{1}}$ with $\beta_{12}= 1.9079$ and $\beta_{21}= 10.4207$. The LTV system switches between these two modes with an identical frequency $\varphi_r$ ($\varphi_r=0.25$~Hz in this example). Then (\ref{cond_LTV}) holds for $c=1.1010$. Therefore, the LTV system (\ref{LTV}) is $\delta$UEAS, as shown in Fig. \ref{fig:3}, despite one of its modes being unstable.
\begin{figure}
\begin{center}
\includegraphics[scale=0.2]{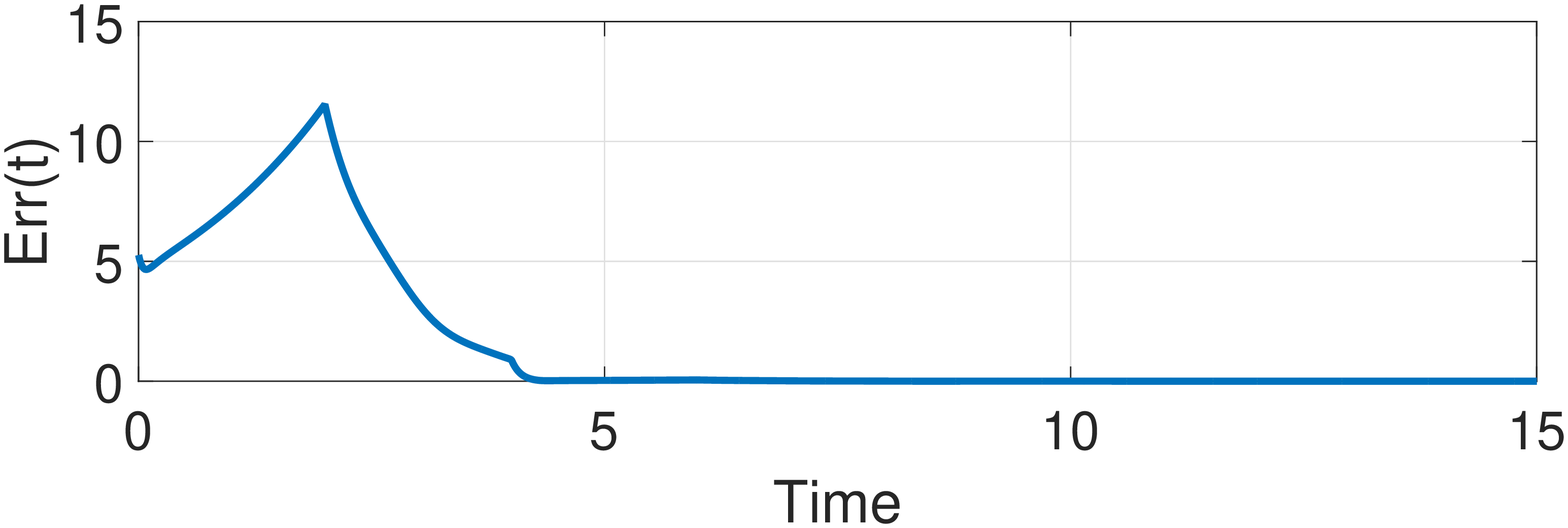}
\caption{Dynamics of the average error $Err(t)$ over $M=10$ independent realizations of random initial values that are picked from $[-10,10]^{2}$ following the uniform distribution. $Err(t)=\frac{1}{M} \sum_{q=1}^{M}\sqrt{(x^{q}_{1}-\bar{x}_{1})^{2}+(x^{q}_{2}-\bar{x}_{2})^2}$ with $\bar{x}_{u}=(1/M)\sum_{q=1}^{M}x_{u}^{q}$ with $u=1,2$, where $x^{q}_{1,2}$ stands for the state components of the $m$-th realization. }\label{fig:3}
\end{center}
\end{figure}

Note $
A_{m}=\frac{A(1)+A(2)}{2}= \begin{pmatrix}-6.2859  &  2.0661\cr
    0.3562  &  0.1182\end{pmatrix}$, which
is unstable as it has eigenvalues of $-6.3988$ and $0.2311$. Hence, the results presented by \cite{PoRo:08} cannot be applied for this specific situation. Nevertheless our extension of contraction analysis to switched systems gives a simple and viable set of conditions that can be used to prove that indeed the system is incrementally stable.

More specifically, from the properties of matrix measures,
\begin{eqnarray*}
&&1/2(\mu(A(1))+\mu(A(2)))\ge\mu(A_{m})\\
&&\ge\max\{\mathcal{Re}(\lambda):~\lambda\in\sigma((A_{m}))\}>0
\end{eqnarray*}
it is not possible to find a uniform norm such that the average of the matrix measures of the switched matrices induced by this matrix norm is negative as required by condition (\ref{cond_LTV}). Therefore, in this case, multiple norms must be utilised for proving contraction and $\delta$EAS of the system.

\noindent{\em\bf Weighted $L_{p}$-norms.}
The weighted $L_{p}$-type norms with $1\le p\le\infty$ are defined as follows.
\begin{itemize}
\item Weighted $L_{p}$-norm: $|x|_{\xi,p}=\left(\sum_{i=1}^{n}\xi_{i}|x_{i}|^{p}\right)^{1/p}$ for some $p\ge 1$ and $\xi=[\xi_{1},\cdots,\xi_{n}]^{\top}$ with $\xi_{i}>0$ for all $i=\oneton$;
\item Weighted $L_{\infty}$-norm: $|x|_{\xi,\infty}=\max_{i}\xi_{i}|x_{i}|$ for some $\xi=[\xi_{1},\cdots,\xi_{n}]^{\top}$ with $\xi_{i}>0$ for all $i=\oneton$.
\end{itemize}
Their transaction coefficients are summarised by the following proposition.
\begin{proposition}\label{prop4} For $p>q\ge 1$ with $p$ possibly equal to $\infty$ and two component-wise vectors $\xi=[\xi_{1},\cdots,\xi_{n}]^{\top}$ and $\eta=[\eta_{1},\cdots,\eta_{n}]^{\top}$ with $\xi_{i},\eta_{i}>0$ for all $i=\oneton$, the following hold: (1) $|x|_{p,\xi}\le\max_{i}\frac{\xi_{i}^{1/p}}{\eta_{i}^{1/q}}|x|_{q,\eta}$; (2) $|x|_{q,\eta}\le\max_{i}\frac{\eta_{i}^{1/q}}{\xi_{i}^{1/p}} (n^{1/q}-n^{1/p})|x|_{p,\xi}$; (3) $|x|_{2,\xi}=|x|_{\Xi}$ with $\Xi={\rm diag}[\xi_{1},\cdots,\xi_{n}]$.
\end{proposition}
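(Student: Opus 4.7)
All three claims reduce to standard inequalities between unweighted $\ell_p$-norms after a diagonal change of variable. For $1\le p<\infty$ I write $|x|_{p,\xi}=\|D_\xi^{1/p}x\|_p$, where $D_\xi^{1/p}=\mathrm{diag}(\xi_1^{1/p},\ldots,\xi_n^{1/p})$ and $\|\cdot\|_p$ is the ordinary (unweighted) $\ell_p$-norm; the $p=\infty$ case is handled directly via $|x|_{\infty,\xi}=\|D_\xi x\|_\infty$, which is the natural limit of the weighted form after absorbing one factor of $\xi$. Each weighted norm is therefore an unweighted $\ell_p$-norm of a rescaled copy of $x$, and the proof only needs (a) diagonal rescalings to pass from $\xi$-weights to $\eta$-weights, and (b) the unweighted inequalities $\|y\|_p\le\|y\|_q$ and $\|y\|_q\le n^{1/q-1/p}\|y\|_p$ on $\R^n$.

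\textbf{Claim (1).} I would set $y=D_\eta^{1/q}x$, so that $D_\xi^{1/p}x=Dy$ with $D$ diagonal and $D_{ii}=\xi_i^{1/p}/\eta_i^{1/q}$. Then $|x|_{p,\xi}=\|Dy\|_p\le(\max_i D_{ii})\,\|y\|_p$, and the elementary unweighted monotonicity $\|y\|_p\le\|y\|_q$ (valid for $p>q\ge 1$ on $\R^n$, and trivially for $p=\infty$) gives $\|y\|_p\le\|y\|_q=|x|_{q,\eta}$. Chaining the two estimates yields (1).

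\textbf{Claim (2).} Swap the roles: with $z=D_\xi^{1/p}x$, write $D_\eta^{1/q}x=\tilde D z$ where $\tilde D_{ii}=\eta_i^{1/q}/\xi_i^{1/p}$, so $|x|_{q,\eta}\le(\max_i\tilde D_{ii})\,\|z\|_q$. It then suffices to bound $\|z\|_q$ by a multiple of $\|z\|_p$, and H\"older's inequality applied to $\sum_i|z_i|^q\cdot 1$ with conjugate exponents $p/q$ and $p/(p-q)$ (in its usual limiting form when $p=\infty$) gives the sharp estimate $\|z\|_q\le n^{1/q-1/p}\|z\|_p$. Substituting into the previous line yields (2). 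I expect the constant $n^{1/q}-n^{1/p}$ printed in the statement to be a typographical slip for $n^{1/q-1/p}$, since the latter is the standard H\"older bound and is exactly what the calculation produces.

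\textbf{Claim (3).} This is immediate from the definitions: with $\Xi=\mathrm{diag}[\xi_1,\ldots,\xi_n]$ one has $|x|_\Xi=\sqrt{x^\top\Xi x}=\sqrt{\sum_i\xi_i x_i^2}=|x|_{2,\xi}$. No substantive obstacle arises anywhere in the proof; the only mildly delicate points are the uniform handling of the case $p=\infty$ via the limiting convention above, and pinning down the correct $n$-dependent constant in (2), which comes straight from H\"older.
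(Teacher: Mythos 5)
Your proof is correct and, in fact, supplies more than the paper does: the paper gives no argument for Proposition~\ref{prop4} at all, only the one-line remark that it ``can be directly derived from'' the cited reference, so there is no in-paper proof to compare against. Your route --- rewriting each weighted norm as an unweighted $\ell_p$-norm of a diagonally rescaled vector and then invoking the two standard comparisons $\|y\|_p\le\|y\|_q$ for $p>q\ge 1$ and $\|z\|_q\le n^{1/q-1/p}\|z\|_p$ (the latter from H\"{o}lder with conjugate exponents $p/q$ and $p/(p-q)$) --- is the standard derivation and presumably exactly what the citation is meant to cover. Two of your side observations deserve emphasis. First, you are right that the constant $n^{1/q}-n^{1/p}$ in part (2) must be a misprint for $n^{1/q-1/p}$: the printed version is already false for $n=2$, $p=2$, $q=1$, $\xi=\eta=[1,1]^{\top}$, $x=[1,1]^{\top}$, where it would assert $2\le(2-\sqrt{2})\sqrt{2}\approx 0.83$, whereas your corrected constant gives equality there. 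Second, the $p=\infty$ case deserves one more sentence than you give it: with the paper's definition $|x|_{\xi,\infty}=\max_{i}\xi_{i}|x_{i}|$ (the full weight $\xi_i$, not $\xi_i^{1/\infty}=1$), the coefficient $\max_{i}\xi_{i}^{1/p}/\eta_{i}^{1/q}$ in part (1) must be read as $\max_{i}\xi_{i}/\eta_{i}^{1/q}$, since the literal formula fails when some $\xi_i>1$ (take $\xi=[10,10]^{\top}$, $\eta=[1,1]^{\top}$, $q=1$, $x=[1,0]^{\top}$); your change-of-variable argument produces the correct coefficient under either convention, but the convention should be stated. Part (3) is immediate from the definitions, as you say.
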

This result can be directly derived from \cite{Bou}.
%
Combining \ref{quad} and Proposition \ref{prop4}, we can derive all transaction coefficients between all quadratic norms, $|\cdot|_{Q}$ with positive definite matrix $Q$, and all $|\cdot|_{\xi,p}$ for $+\infty\ge p\ge 1$. The same transaction coefficients hold for the equivalence between the matrix norms induced by these vector norms.

\noindent{\it\bf Structured vector norm.} We define a structured vector norm, following the approach presented in \cite{RuBe:13}. Specifically, assume $x=[x_{1},\cdots,x_{n}]^{\top}\in R^{n}$ can be partitioned into $K$ vectors $x^{k}\in R^{n_{k}}$, $k=\onetoK$, such that $x=[{x^{k}}^{\top},\cdots,{x^{K}}^{\top}]^{\top}$ with $\sum_{k=1}^{K}n_{k}=n$. Let $|x^{k}|_{s_{k}}$  be the norm in $\R^{n_{k}}$.  Then, the {\em structured norm} of $x$ is denoted by $\vert \cdot \vert_G$ and defined as:
\begin{eqnarray}
|x|_{G}=\left|[|x^{1}|_{s_{1}},\cdots,|x^{K}|_{s_{k}}]^{\top}\right|_{S}\label{S}
\end{eqnarray}
where the norm $|\cdot|_{S}$ is defined in $\R^{K}$.

Given the same partition of $x^{k}$, $k=\onetoK$, consider another structured norm $\vert \cdot\vert_{G'}$ based on using the vector norms $|\cdot|_{s_{k}'}$ in $\R^{n_{k}}$, $k=\onetoK$, and $|\cdot|_{S'}$ in $\R^{K}$ such that $
|x|_{G'}=\left|[|x^{1}|_{s_{1}'},\cdots,|x^{K}|_{s_{k}'}]^{\top}\right|_{S'}$.
\begin{proposition}\label{prop5}
Let $\vert \cdot\vert_{G}$ and $\vert \cdot\vert_{G'}$ be two structured norms defined as mentioned above. Let 
$\tau_{S}$ be the transaction coefficient from the norm $|\cdot|_{S'}$ to $|\cdot|_{S}$ and $\tau_{k}$ be the the transaction coefficient from the norm $|\cdot|_{s'_{k}}$ to $|\cdot|_{s_{k}}$. Then, we have $
|x|_{G'}\le \tau_{S}|U|_{S} |x|_{G}$
with $U=diag[\tau_{k}]_{k=1}^{K}$.
\end{proposition}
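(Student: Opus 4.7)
The plan is to reduce the inequality to a short chain of elementary bounds on auxiliary vectors in $\R^K$, by representing both structured norms via (\ref{S}) and then moving between them with the two given transaction coefficients together with monotonicity of the outer norm.

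Concretely, I would set $w = [|x^1|_{s_1},\ldots,|x^K|_{s_K}]^\top$ and $w' = [|x^1|_{s'_1},\ldots,|x^K|_{s'_K}]^\top$, so that $|x|_G = |w|_S$ and $|x|_{G'} = |w'|_{S'}$ and both vectors lie in the non-negative orthant of $\R^K$. Applying the inner transaction coefficients $\tau_k$ componentwise then yields the entrywise inequality $w' \le U w$, where $U = \mathrm{diag}[\tau_k]_{k=1}^K$.

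The next move is to lift this componentwise bound to the outer scale. Monotonicity of $|\cdot|_S$ on the non-negative cone turns $w' \le Uw$ into $|w'|_S \le |Uw|_S$, and submultiplicativity of the induced matrix norm gives $|Uw|_S \le |U|_S\, |w|_S = |U|_S\, |x|_G$. Finally, the outer transaction coefficient $\tau_S$ yields $|w'|_{S'} \le \tau_S |w'|_S$. Chaining these three estimates produces $|x|_{G'} = |w'|_{S'} \le \tau_S |w'|_S \le \tau_S |U|_S |x|_G$, which is the claim.

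The only step that is not purely mechanical is the monotonicity of $|\cdot|_S$ on the non-negative orthant — i.e.\ $0 \le a \le b$ entrywise implies $|a|_S \le |b|_S$. This is automatic for the weighted $L_p$-norms classified in Proposition \ref{prop4}, which are the outer norms actually used in the paper, and is the only hypothesis beyond what the proposition explicitly lists. Everything else is immediate from the definitions of the transaction coefficient, the induced matrix norm, and the structured norm (\ref{S}).
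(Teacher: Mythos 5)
Your argument is essentially the canonical one, and it is worth noting that the paper offers no proof of Proposition \ref{prop5} at all --- it merely asserts that the result ``can be derived as a consequence of'' \cite{RuBe:13}. Your chain $|x|_{G'}=|w'|_{S'}\le\tau_S|w'|_S\le\tau_S|Uw|_S\le\tau_S|U|_S|w|_S=\tau_S|U|_S|x|_G$ therefore supplies exactly the missing argument, and each link is the right one. Your most valuable observation is the one you flag yourself: the step from the entrywise bound $w'\le Uw$ to $|w'|_S\le|Uw|_S$ requires the outer norm $|\cdot|_S$ to be monotone on the non-negative orthant, which is \emph{not} true of an arbitrary norm on $\R^{K}$ (e.g.\ $|v|=|v_1-v_2|+|v_2|$ is a norm with $|(0,1)^{\top}|=2>1=|(1,1)^{\top}|$) and which the proposition does not hypothesize. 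That is a gap in the statement as printed, not in your proof; monotonicity of the outer norm is standard in the hierarchical-norm literature, including \cite{RuBe:13}, and it holds for the weighted $L_p$ outer norms the paper actually uses, so your added hypothesis is the correct repair.

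One direction issue deserves to be made explicit rather than absorbed silently. With the paper's convention $\beta_{ab}=\sup_{|x|_a=1}|x|_b$, the ``transaction coefficient from $|\cdot|_{S'}$ to $|\cdot|_{S}$'' gives $|v|_S\le\tau_S|v|_{S'}$, and likewise $\tau_k$ gives $|y|_{s_k}\le\tau_k|y|_{s'_k}$; under that literal reading your inequalities $w'\le Uw$ and $|w'|_{S'}\le\tau_S|w'|_S$ point the wrong way, and what actually follows is the reverse bound $|x|_{G}\le\tau_S|U|_S|x|_{G'}$. The hypotheses of the proposition are evidently mis-stated (the ``from''/``to'' labels are swapped relative to the conclusion), and you have used the coefficients in the direction the conclusion requires; that is the sensible reading, but a complete write-up should note the discrepancy in one line rather than leave the reader to reconcile it.
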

This result can be derived as a consequence of those presented by \cite{RuBe:13}.

\cite{RuBe:13} showed that contraction analysis can be used to carry out the hierarchical analysis and design of networked systems. Analogously, consider system (\ref{LTV}). Partition $x$ into several sub-vectors, $K$ vectors: $x^{k}\in R^{n_{k}}$, $k=\onetoK$, so that $x=[{x^{k}}^{\top},\cdots,{x^{K}}^{\top}]^{\top}$ with $\sum_{k=1}^{K}n_{k}=n$. Each $x^{k}$ corresponds to the $k$-th subsystem. Then, system (\ref{LTV}) can be equivalently written in the following form:
\begin{eqnarray}
\dot{x}^{k}=\sum_{k'=1}^{K}A_{kk'}(t)x^{k}+B^{k},~k=\onetoK,\label{LTV2}
\end{eqnarray}
where
\begin{eqnarray*}
&&A(r)=\begin{pmatrix}A_{11}(r)&A_{12}(r)&\cdots&A_{1K}(r)\cr
A_{21}(r)&A_{22}(r)&\cdots&A_{2K}(r)\cr
\vdots&\vdots&\cdots&\vdots\cr
A_{K1}(r)&A_{K2}(r)&\cdots&A_{KK}(r)\end{pmatrix},~r=1,2,
\end{eqnarray*}
and $[B_{1}^{\top},\cdots,B^{K}]^{\top}=B$.

The norm of $x$ is defined by (\ref{S}). Let $\tilde{A}_{ij}(t)=|A_{ij}(t)|_{ij}$, where the norm $|\cdot|_{ij}$ is defined by
\begin{eqnarray*}
|A_{ij}(r)|_{ij}=\sup_{|x^{j}|_{s_{j}}=1}|A_{ij}(r)x^{j}|_{s_{i}}.
\end{eqnarray*}
and consider the reduced $K\times K$ matrix
\begin{eqnarray*}
&&\tilde{A}(r)=\begin{pmatrix}\tilde{A}_{11}(r)&\tilde{A}_{12}(r)&\cdots
&\tilde{A}_{1K}(t)\cr
\tilde{A}_{21}(r)&\tilde{A}_{22}(r)&\cdots&\tilde{A}_{2K}(r)\cr
\vdots&\vdots&\cdots&\vdots\cr
\tilde{A}_{K1}(r)&\tilde{A}_{K2}(r)&\cdots&\tilde{A}_{KK}(r)\end{pmatrix},~r=1,2.
\end{eqnarray*}
Let $|\cdot|_{S,1}$ and $|\cdot|_{S,2}$ be two norms in $\R^{K}$ and  $\beta'_{12}$ and $\beta'_{21}$ be their transaction coefficients such that $|\cdot|_{S,1}\le \beta'_{12}|\cdot|_{S,2}$ and $|\cdot|_{S,2}\le \beta'_{21}|\cdot|_{S,1}$. We take the multiple norms in $\R^{n}$ as $
|\cdot|_{G,r}=|\tilde{A(r)}|_{S,r}$, $r=1,2$.
Let $\mu_{G,r}$ be the matrix measure induced by the vector norm $|\cdot|_{G,r}$ in $\R^{n}$ and $\mu_{S,r}$ be that of $|\cdot|_{S,r}$ in $\R^{K}$.  Then, it can be derived that $\mu_{G,r}(A(r))\le\mu_{S,r}(\tilde{A}(r))$ \citep{RuBe:13}.
Proposition \ref{prop5} implies that $\beta'_{12}$ and $\beta'_{21}$ can be the transactions coefficients between $|\cdot|_{G,1}$ and $|\cdot|_{G,2}$ as well.

Therefore, suppose that switched system (\ref{LTV}) is in the block-wise form (\ref{LTV2}). Then,
Corollary \ref{thm1}, specifically inequality (\ref{cond_LTV}), can be fulfilled assuming that it holds that $
(1/2)\left[\mu_{S,1}(\tilde{A}(1))+\mu_{S,2}(\tilde{A}(2))+fr\cdot(\log\beta'_{12}+\log\beta'_{21})
\right]<-c$.

\section{Synchronization in switched networks}
Finally, we consider a network example inspired from one first presented in \cite{DiLi:14}. We assume the network equation is given by
\begin{eqnarray}
\dot{x}^{i}=f(x^{i}(t))-k\sigma(t)\sum_{j=1}^{m}L_{ij}\Gamma x^{j}(t),~i=\onetom.\label{blknet}
\end{eqnarray}
Here, $x^{i}\in\R^{n}$ stands for the state vector at node $i$, $f(\cdot):\R^{n}\to\R^{n}$ the node dynamics, $k$ is the coupling strength, $L=[L_{ij}]_{i,j=1}^{m}$ is the Laplacian matrix associated with a graph $G=[V,E]$, where $V=\{\onetom\}$ is the node set and $E$ the link set, by the way that for each $(i,j)$,  $L_{ij}$ takes value $-1$ if there is a link from node $j$ to $i$ and $0$ otherwise, and $L_{ii}=-\sum_{j=1}^{m}L_{ij}$.  $\sigma(t)$ takes values $0$ or $1$, implying that the diffusive coupling among the nodes in the graph $G$ is only active when $\sigma(t)=1$ while it is not present when $\sigma(t)=0$. $\Gamma\in\R^{n,n}$ stands for the inner coupling matrix.
We can rewrite (\ref{blknet}) in compact form as:
\begin{eqnarray}
\dot{x}=F(x(t))-k\left[\sigma(t)L\otimes\Gamma\right] x(t)\label{blknetm}
\end{eqnarray}
where $x=[{x^{1}}^{\top},\cdots,{x^{m}}^{\top}]^{\top}\in\R^{mn}$,  \\$F(x)=[f^{\top}(x^{1}),\cdots,f^{\top}(x^{m})]^{\top}$ and $\otimes$ is the Kronecker product. Assume that $L$ is diagonalisable, i.e., there exists a nonsingular $Q\in\R^{m,m}$ such that $L=Q^{-1}JQ$ with a diagonal matrix $J=diag[\lambda_{j}]_{j=1}^{m}$, where $\lambda_{j}$, $j=\onetom$, are the eigenvalues of $J$, which are assumed to be real. Without loss of generality, we can assume that $0=\lambda_{1}\ge\lambda_{2}\ge\cdots\ge\lambda_{m}$. Note that $\lambda_{1}=0$ is associated with the synchronization eigenvector $[1,\cdots,1]^{\top}$.

Following similar arguments to those by \cite{Car_Pec_91,RuDi:11,DiLi:14,Yi_Lu_Chen:13}, synchronization of network (\ref{blknetm}) can be achieved if  the following linear systems (obtained via linearisation and block diagonalization of (\ref{blknetm})):
\begin{eqnarray}
\dot{\phi}=[Df(w(t))-k\lambda_{i}\sigma(t)]\phi,~i=2,\cdots,m,\label{syn_var}
\end{eqnarray}
are contracting, where $w(t)$ is a solution of the uncoupled system
\begin{eqnarray}
\dot{w}=f(w),\label{uncoupled}
\end{eqnarray}
and $Df(\cdot)$ is the Jacobian of $f(\cdot)$. Assume
\begin{itemize}
\item[$\mathcal H_{5}$:] System (\ref{uncoupled}) has an asymptotically stable attractor $\mathcal A$ \citep[see][Assumption 2]{Yi_Lu_Chen:13}.
\end{itemize}
\begin{definition}
System (\ref{blknet}) is said to synchronize if there exists $\delta>0$ such that for any $x^{i}(t_{0})\in \mathcal B(\mathcal A,\delta)$, $\lim_{t\to\infty}|x^{i}(t)-x^{j}(t)|=0$ for all $i,j=\onetom$. Here, $\mathcal B(\mathcal A,\delta)=\{y:\inf_{z\in\mathcal A}|y-z|\le \delta\}$ denotes $\delta$-neighbourhood of set $\mathcal A$.
\end{definition}

According to Corollary  \ref{thm1}, in particular, equation (\ref{cond_cor1}), we have the following result:
\begin{proposition}\label{prop_blk}
Consider two vector norms $|\cdot|_{0}$ and $|\cdot|_{1}$, which induce two matrix measures $\mu_{0}(\cdot)$ and $\mu_{1}(\cdot)$ respectively. Suppose that $\mathcal H_{4}$ holds and $\mu_{1}(-\Gamma)\le 0$. If there exist $T_{0}>0$ and $c>0$ such that
\begin{eqnarray}
&&\frac{1}{T}\bigg[\mu_{0}(Df(w))\mathcal T_{0}(nT,(n+1)T)\nonumber\\
&&+\mu_{1}(Df(w)-k\lambda_{2}\Gamma)\mathcal T_{1}(nT,(n+1)T)\nonumber\\
&&+\mathcal N_{01}(nT,(n+1)T)\log\beta_{01}\nonumber\\
&&+\mathcal N_{01}(nT,(n+1)T)\log\beta_{10}\bigg]<-c\label{cond_prop10}
\end{eqnarray}
holds for all $T>T_{0}$ and $w \in \mathcal A$, where $\mathcal T_{u}(s,t)$ stands for the duration of $\sigma(t)=u$ in the time interval $(s,t]$ and $\mathcal N_{uv}(s,t)$ is the number of switches from $\sigma(t)=u$ to $\sigma(t)=v$, for all $u\ne v$, $u,v=0,1$,  then system (\ref{blknetm}) synchronises.
\end{proposition}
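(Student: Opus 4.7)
The plan is to reduce synchronisation of (\ref{blknetm}) to the incremental exponential decay of the $m-1$ transverse variational systems (\ref{syn_var}) and then to invoke Corollary \ref{thm1} in the form (\ref{cond_cor1}), applied mode by mode with the two-state switching signal $\sigma(t)\in\{0,1\}$ (which satisfies $\mathcal H_4$ by hypothesis) and the pair of multiple norms $|\cdot|_0,|\cdot|_1$. Condition (\ref{cond_prop10}) is precisely the $K=2$ specialisation of (\ref{cond_cor1}) for the worst-case transverse mode $i=2$, so once the reduction is in place and that mode is identified as the worst, the corollary delivers the conclusion.

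To carry out the reduction I invoke $\mathcal H_5$: the attractor $\mathcal A$ of the uncoupled system (\ref{uncoupled}) is asymptotically stable, so one can fix $\delta>0$ sufficiently small that every initial condition with $x^i(t_0)\in\mathcal B(\mathcal A,\delta)$ generates a trajectory of (\ref{blknetm}) that remains in a compact tube about the synchronisation manifold $\{[w^\top,\ldots,w^\top]^\top:w\in\mathcal A\}$. This localisation makes linearisation around a synchronous solution $w(t)$ of (\ref{uncoupled}) meaningful. Using the block diagonalisation $y=(Q\otimes I_n)x$ supplied in the statement, the synchronous block $y^1$ obeys (\ref{uncoupled}) alone and plays no role in synchronisation, while the linearised transverse blocks obey (\ref{syn_var}) for $i=2,\ldots,m$ along $w(t)\in\mathcal A$.

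For each such $i$ I apply Corollary \ref{thm1} to (\ref{syn_var}) with $\alpha_0:=\mu_0(Df(w))$ on intervals where $\sigma=0$, $\alpha_1^{(i)}:=\mu_1(Df(w)-k\lambda_i\Gamma)$ on intervals where $\sigma=1$, and transaction coefficients $\beta_{01},\beta_{10}$ between $|\cdot|_0$ and $|\cdot|_1$. Writing
\begin{eqnarray*}
Df(w)-k\lambda_i\Gamma=\bigl[Df(w)-k\lambda_2\Gamma\bigr]+k(\lambda_i-\lambda_2)(-\Gamma),
\end{eqnarray*}
and noting that $\lambda_2$ is the nonzero eigenvalue of $L$ of smallest modulus, so that the coefficient $k(\lambda_i-\lambda_2)$ (under the spectral convention of the statement) is nonnegative for $i\ge 2$, subadditivity and positive homogeneity of $\mu_1$ together with $\mu_1(-\Gamma)\le 0$ yield
\begin{eqnarray*}
\mu_1\bigl(Df(w)-k\lambda_i\Gamma\bigr)\le\mu_1\bigl(Df(w)-k\lambda_2\Gamma\bigr)+k(\lambda_i-\lambda_2)\mu_1(-\Gamma)\le\mu_1\bigl(Df(w)-k\lambda_2\Gamma\bigr).
\end{eqnarray*}
Hence $\alpha_1^{(i)}\le\mu_1(Df(w)-k\lambda_2\Gamma)$ for every $i\ge 2$, and hypothesis (\ref{cond_prop10}) becomes exactly the instance of (\ref{cond_cor1}) needed for each mode. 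Corollary \ref{thm1} therefore yields exponential decay of $|y^i(t)|$ at rate $c$, uniformly in $i\ge 2$ and in the synchronous solution $w\in\mathcal A$.

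The final and most delicate step is to lift this variational decay back to the original nonlinear system (\ref{blknetm}). Following the route of \cite{RuDi:11,DiLi:14,Yi_Lu_Chen:13}, one uses $\mathcal H_5$ to keep the full trajectory inside a compact neighbourhood of $\mathcal A$ on which the higher-order terms of the Taylor expansion of $F$ around $[w^\top,\ldots,w^\top]^\top$ can be absorbed, after possibly shrinking $\delta$, by the exponential rate $c$ produced by Corollary \ref{thm1}. This delivers $|x^i(t)-x^j(t)|\to 0$ for all $i,j$ whenever $x^i(t_0)\in\mathcal B(\mathcal A,\delta)$, which is the synchronisation property demanded by the definition preceding the proposition.
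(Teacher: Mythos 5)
Your proposal is correct and follows essentially the same route as the paper's own proof: the key step in both is the matrix-measure inequality $\mu_1(Df(w)-k\lambda_i\Gamma)\le\mu_1(Df(w)-k\lambda_2\Gamma)+k(\lambda_i-\lambda_2)\mu_1(-\Gamma)\le\mu_1(Df(w)-k\lambda_2\Gamma)$, which reduces all transverse modes to the worst case $i=2$, after which Corollary \ref{thm1} (via condition (\ref{cond_cor1})) is applied to the variational equations (\ref{syn_var}) and the conclusion is lifted to the nonlinear system by appeal to the arguments of \cite{DiLi:14} and \cite{Yi_Lu_Chen:13}. Your reading of the eigenvalue ordering (taking $k(\lambda_i-\lambda_2)\ge 0$ for $i\ge 2$, consistent with the numerical example where $\lambda_2=2.7142>0$) is the one under which the subadditivity step is valid.
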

\begin{proof}
Note that for each $i=2,\cdots,m$,
\begin{eqnarray*}
&&\mu_{1}(Df(w)-k\lambda_{i}\Gamma)\\&&\le \mu_{1}(Df(w)-k\lambda_{2}\Gamma)+k(\lambda_{i}-\lambda_{2})\mu_{1}(-\Gamma)\\
&&\le \mu_{1}(Df(w)-k\lambda_{2}\Gamma).
\end{eqnarray*}
Due to $\mu_{1}(-\Gamma)\le 0$, then (\ref{cond_prop10}) holds for all $i=2,\cdots,m$.
It can then be proved  in a straightforward manner that the variational equations (\ref{syn_var}) are asymptotically stable for all $i\ge 2$ by verifying condition (\ref{cond_cor1}) and following the same steps of the proof of Theorem 5.1 in \cite{DiLi:14}. Thus, following the arguments in the proof of Theorem 17 in \cite{Yi_Lu_Chen:13}, there is an invariant open convex set $U\in\R^{mn,mn}$ such that $U\supset \mathcal S$, where $\mathcal S=\{x=[{x^{1}}^{\top},\cdots,{x^{m}}^{\top}]^{\top}:x^{i}=x^{j}\in\mathcal A,\forall~i,j\}$, is invariant for the coupled system (\ref{blknetm}). Furthermore,  $\mathcal S$ is an asymptotically stable set for system (\ref{blknetm}). This proves synchronization.
\end{proof}

To illustrate this result, we take
\begin{eqnarray*}
f(w)=\begin{cases} p\{G[-w_{1}+w_{2}]-g(w_{1})\}\\
G[w_{1}-w_{2}]+w_{3}\\
-q w_{2},\end{cases}
\end{eqnarray*}
where $g(w_{1})=m_{0}w_{1}+1/2(m_{1}-m_{0})(|w_{1}+1|-|w_{1}-1|)$ as the Chua' circuit with $m_{0}=-0.5$, $m_{1}=-0.8$, $G=0.7$, $p=9$ and $q=7$, which was reported to exhibit a double-scroll chaotic attractor for the node dynamics (\ref{uncoupled}) \cite{Chua:1985}.
Moreover, we take $\Gamma$ as the identity matrix, and assume the underlining graph of $10$ nodes  has the structure shown in Fig. \ref{fig:4},
\begin{figure}
\begin{center}
\includegraphics[scale=0.2]{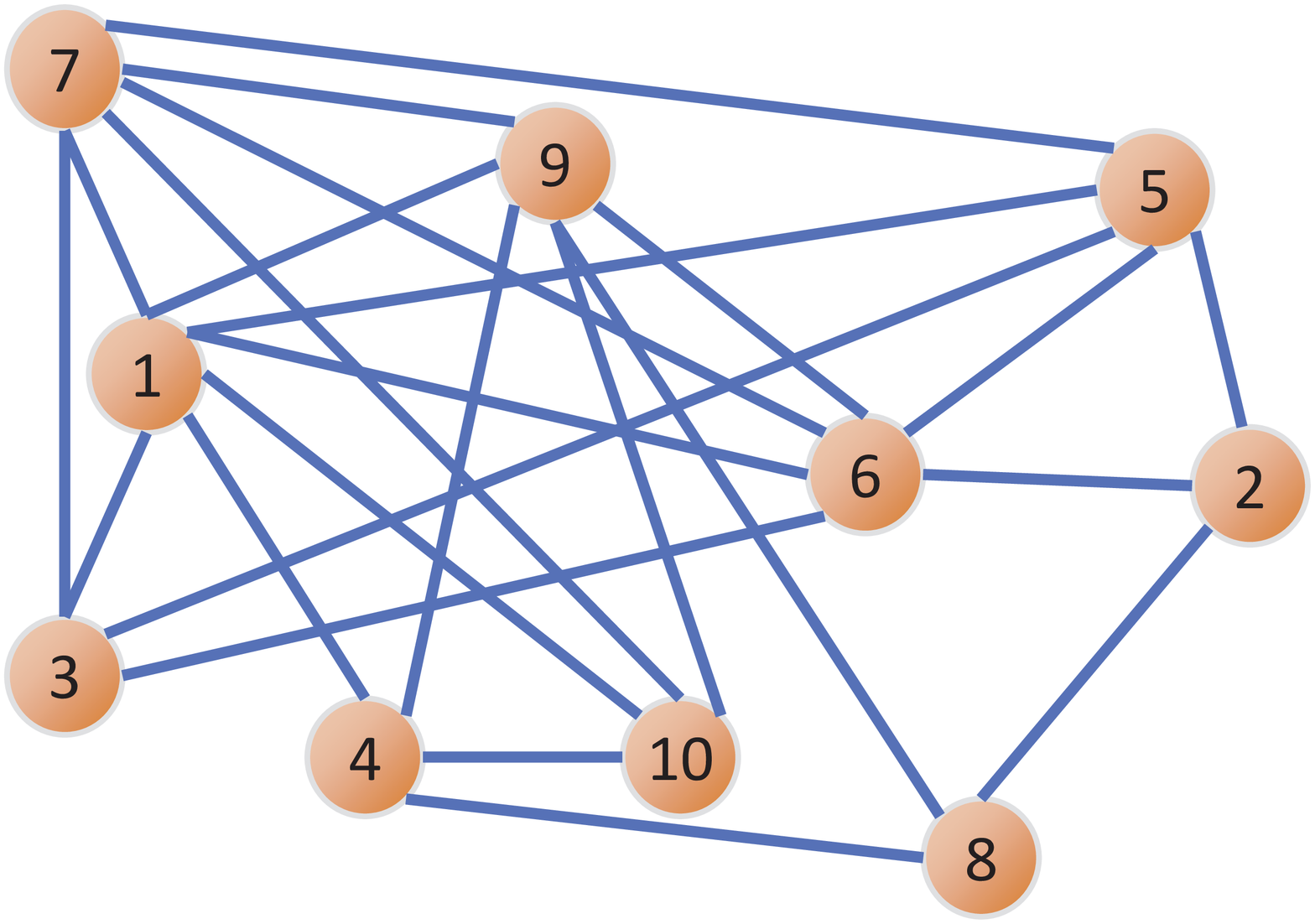}
\caption{The underlying graph topology.}\label{fig:4}
\end{center}
\end{figure}
associated with a Laplacian $L$ with $\lambda_{2}(L)=2.7142$. To study convergence, we choose weighted $1$-norms: $
|y|_{0}=|y_{1}|+r_{2}|y_{2}|+r_{3}|y_{3}|$, with $r_{2}=3.4042$ and $r_{3}=1.0369$, and $|y|_{1}=\sqrt{|y_{1}|^2+|y_{2}|^2+|y_{3}|^2}$,
which implies (i) $\beta_{01}=4.3163$ and $\beta_{10}=1$; (ii) $\mu_{0}(Df(w))\le 3.2829$ for all $t$. We take $k=1$, so that $
\mu_{1}(Df(w)-k\lambda_{2}(L)\Gamma)\le-7.4714$, for all $t$.
The switching signal $\sigma(t)$ is taken as follows:
\begin{eqnarray*}
\sigma(t)=\begin{cases}0&t\in[kT,kT+1/4T)\\
1&t\in(kT+1/4T,(k+1)T)\end{cases}
\end{eqnarray*}
for some $T>0$. Condition (\ref{cond_prop10}) in Proposition \ref{prop_blk} implies that if $T>13.08$, then system (\ref{blknetm}) synchronises. To illustrate this result, we take $T=14$. Fig. \ref{fig:5} shows that, as expected, all nodes synchronise.
\begin{figure}
\begin{center}
\includegraphics[width=0.4\textwidth]{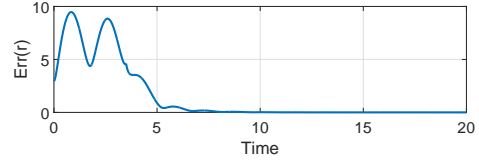}
\caption{Synchronisation dynamics of the network system (\ref{blknetm}) with random initial values that are picked from $[-1,1]^{2}$, $i=1,2,\cdots,10$. Here, $Err(t)=\frac{1}{m} \sum_{i=1}^{m}\sqrt{\sum_{j=1}^{3}(x^{i}_{j}-\bar{x}_{j})^{2}}$ with $\bar{x}_{j}=(1/m)\sum_{i=1}^{m}x_{j}^{i}$.}\label{fig:5}
\end{center}
\end{figure}


\section{Conclusions}
We have presented an extension of contraction analysis to switched Carath\'{e}odory systems. The key step was the definition of switched matrix measures induced by multiple norms. Using these measures, it was possible to derive different sets of sufficient conditions for asymptotic incremental stability of the systems of interest. Most notably, it was possible to prove contraction and hence incremental stability by using different norms, each associated to a different mode of the switched system under investigation. This complements and extends in a highly nontrivial manner to the case of multiple norms, previous results presented by some of the authors \citep[see][]{RuBe:13,DiLi:14}, where contraction was studied by using a common metric.
The theoretical results were illustrated on a set of representative examples and applications showing the effectiveness of the proposed method.

\bibliographystyle{model5-names}
\bibliography{contraction}

\begin{thebibliography}{31}
\expandafter\ifx\csname natexlab\endcsname\relax\def\natexlab#1{#1}\fi
\providecommand{\url}[1]{\texttt{#1}}
\providecommand{\href}[2]{#2}
\providecommand{\path}[1]{#1}
\providecommand{\DOIprefix}{doi:}
\providecommand{\ArXivprefix}{arXiv:}
\providecommand{\URLprefix}{URL: }
\providecommand{\Pubmedprefix}{pmid:}
\providecommand{\doi}[1]{\href{http://dx.doi.org/#1}{\path{#1}}}
\providecommand{\Pubmed}[1]{\href{pmid:#1}{\path{#1}}}
\providecommand{\bibinfo}[2]{#2}
\ifx\xfnm\relax \def\xfnm[#1]{\unskip,\space#1}\fi
\bibitem[{Angeli(2002)}]{Ang}
\bibinfo{author}{Angeli, D.} (\bibinfo{year}{2002}).
\newblock \bibinfo{title}{A {L}yapunov approach to incremental stability
  properties}.
\newblock {\it \bibinfo{journal}{IEEE Transactions on Automatic Control}\/},
  {\it \bibinfo{volume}{47}\/}, \bibinfo{pages}{410--421}.
\bibitem[{di~Bernardo et~al.(2008)di~Bernardo, Budd, Champneys \&
  Kowalczyk}]{BeBu:08}
\bibinfo{author}{di~Bernardo, M.}, \bibinfo{author}{Budd, C.},
  \bibinfo{author}{Champneys, A.}, \& \bibinfo{author}{Kowalczyk, P.}
  (\bibinfo{year}{2008}).
\newblock {\it \bibinfo{title}{Piecewise Smooth Dynamical Systems: Theory and
  Applications}\/}.
\newblock \bibinfo{publisher}{Springer-Verlag (London)}.
\bibitem[{di~Bernardo \& Fiore(2014)}]{DiFi:14}
\bibinfo{author}{di~Bernardo, M.}, \& \bibinfo{author}{Fiore, D.}
  (\bibinfo{year}{2014}).
\newblock \bibinfo{title}{Incremental stability of bimodal filippov systems in
  $r^n$}.
\newblock In {\it \bibinfo{booktitle}{Proc. IEEE Conference on Decision and
  Control}\/} (pp. \bibinfo{pages}{4679--4684}).
\bibitem[{di~Bernardo \& Liuzza(2013)}]{dili:12}
\bibinfo{author}{di~Bernardo, M.}, \& \bibinfo{author}{Liuzza, D.}
  (\bibinfo{year}{2013}).
\newblock \bibinfo{title}{A criterion for incremental stability of planar
  filippov systems}.
\newblock In {\it \bibinfo{booktitle}{Proceedings of the European Control
  Conference}\/} (pp. \bibinfo{pages}{3706--3711}).
\bibitem[{di~Bernardo et~al.(2014)di~Bernardo, Liuzza \& Russo}]{DiLi:14}
\bibinfo{author}{di~Bernardo, M.}, \bibinfo{author}{Liuzza, D.}, \&
  \bibinfo{author}{Russo, G.} (\bibinfo{year}{2014}).
\newblock \bibinfo{title}{Contraction analysis for a class of nondifferentiable
  systems with applications to stability and network synchronization}.
\newblock {\it \bibinfo{journal}{SIAM Journal on Control and Optimization}\/},
  {\it \bibinfo{volume}{52}\/}, \bibinfo{pages}{3203--3227}.
\bibitem[{Bourbaki(1978)}]{Bou}
\bibinfo{author}{Bourbaki, N.} (\bibinfo{year}{1978}).
\newblock {\it \bibinfo{title}{Topological Vector Spaces}\/}.
\newblock \bibinfo{publisher}{Spriger-Verlag}.
\bibitem[{Carroll \& Pecora(1991)}]{Car_Pec_91}
\bibinfo{author}{Carroll, T.~L.}, \& \bibinfo{author}{Pecora, L.~M.}
  (\bibinfo{year}{1991}).
\newblock \bibinfo{title}{Synchronizing chaotic circuits}.
\newblock {\it \bibinfo{journal}{IEEE Transactions on Circuits and Systems}\/},
   {\it \bibinfo{volume}{38}\/}, \bibinfo{pages}{453--456}.
\bibitem[{Coppel(1965)}]{Coppel}
\bibinfo{author}{Coppel, W.} (\bibinfo{year}{1965}).
\newblock {\it \bibinfo{title}{Stability and asymptotic behavior of
  differential equations}\/}.
\newblock \bibinfo{address}{Boston, Mass.}: \bibinfo{publisher}{D.C. Heath}.
\bibitem[{Cortes(2008)}]{Co:08}
\bibinfo{author}{Cortes, J.} (\bibinfo{year}{2008}).
\newblock \bibinfo{title}{Discontinuous dynamical systems}.
\newblock {\it \bibinfo{journal}{IEEE Control Systems Magazine}\/},  {\it
  \bibinfo{volume}{28}\/}, \bibinfo{pages}{36--73}.
\bibitem[{Demidovich(1967)}]{de:67}
\bibinfo{author}{Demidovich, B.~P.} (\bibinfo{year}{1967}).
\newblock {\it \bibinfo{title}{Lectures on Stability Theory}\/}.
\newblock \bibinfo{organization}{Nauka} \bibinfo{address}{Moscow}.
\bibitem[{Filippov(1988)}]{Fi:88}
\bibinfo{author}{Filippov, A.} (\bibinfo{year}{1988}).
\newblock {\it \bibinfo{title}{Differential equations with discontinuous
  righthand sides}\/}.
\newblock \bibinfo{publisher}{Kluwer}.
\bibitem[{Forni \& Sepulchre(2014)}]{FoSe:14}
\bibinfo{author}{Forni, F.}, \& \bibinfo{author}{Sepulchre, R.}
  (\bibinfo{year}{2014}).
\newblock \bibinfo{title}{A differential lyapunov framework for contraction
  analysis}.
\newblock {\it \bibinfo{journal}{IEEE Transactions on Automatic Control}\/},
  {\it \bibinfo{volume}{59}\/}, \bibinfo{pages}{614--628}.
\bibitem[{Hale(1954)}]{Hale}
\bibinfo{author}{Hale, J.} (\bibinfo{year}{1954}).
\newblock {\it \bibinfo{title}{Ordinary Differential Equations}\/}.
\newblock \bibinfo{address}{New York}: \bibinfo{publisher}{John Wiley}.
\bibitem[{Jouffroy(2005)}]{Jou_05}
\bibinfo{author}{Jouffroy, J.} (\bibinfo{year}{2005}).
\newblock \bibinfo{title}{Some ancestors of contraction analysis}.
\newblock In {\it \bibinfo{booktitle}{Proceedings of the International
  Conference on Decision and Control}\/}.
\bibitem[{Lewis(1949)}]{Lewis}
\bibinfo{author}{Lewis, D.~C.} (\bibinfo{year}{1949}).
\newblock \bibinfo{title}{Metric properties of differential equations}.
\newblock {\it \bibinfo{journal}{American Journal of Mathematics}\/},  {\it
  \bibinfo{volume}{71}\/}, \bibinfo{pages}{294--312}.
\bibitem[{Liberzon(2003)}]{Li:03}
\bibinfo{author}{Liberzon, D.} (\bibinfo{year}{2003}).
\newblock {\it \bibinfo{title}{Switching in Systems and Control}\/}.
\newblock \bibinfo{publisher}{Birkhauser (Berlin)}.
\bibitem[{Lohmiller \& Slotine(1998)}]{Loh_Slo_98}
\bibinfo{author}{Lohmiller, W.}, \& \bibinfo{author}{Slotine, J. J.~E.}
  (\bibinfo{year}{1998}).
\newblock \bibinfo{title}{On contraction analysis for non-linear systems}.
\newblock {\it \bibinfo{journal}{Automatica}\/},  {\it \bibinfo{volume}{34}\/},
  \bibinfo{pages}{683--696}.
\bibitem[{Matsumoto et~al.(1985)Matsumoto, Chua \& Komoru}]{Chua:1985}
\bibinfo{author}{Matsumoto, T.}, \bibinfo{author}{Chua, L.~O.}, \&
  \bibinfo{author}{Komoru, M.} (\bibinfo{year}{1985}).
\newblock \bibinfo{title}{The double scroll}.
\newblock {\it \bibinfo{journal}{IEEE Transactions on Circuits and Systems}\/},
   {\it \bibinfo{volume}{32}\/}, \bibinfo{pages}{798--818}.
\bibitem[{Pavlov et~al.(2005{\natexlab{a}})Pavlov, Pogromsky, von~de Wouw,
  Nijmeijer \& Rooda}]{PaPo:05_2}
\bibinfo{author}{Pavlov, A.}, \bibinfo{author}{Pogromsky, A.},
  \bibinfo{author}{von~de Wouw, N.}, \bibinfo{author}{Nijmeijer, H.}, \&
  \bibinfo{author}{Rooda, K.} (\bibinfo{year}{2005}{\natexlab{a}}).
\newblock \bibinfo{title}{Convergent piecewise affine systems: anaysis and
  design part 2: discontinuous case}.
\newblock In {\it \bibinfo{booktitle}{Proceedings of the Conference Decision
  and Control}\/} (pp. \bibinfo{pages}{5391 -- 5396}).
\bibitem[{Pavlov et~al.(2007)Pavlov, Pogromsky, van~de Wouw \&
  Nijmeier}]{PaPo:07}
\bibinfo{author}{Pavlov, A.}, \bibinfo{author}{Pogromsky, N.},
  \bibinfo{author}{van~de Wouw, N.}, \& \bibinfo{author}{Nijmeier, H.}
  (\bibinfo{year}{2007}).
\newblock \bibinfo{title}{On convergence properties of piecewise affine
  systems}.
\newblock {\it \bibinfo{journal}{International Journal of Control}\/},  {\it
  \bibinfo{volume}{80}\/}, \bibinfo{pages}{1233--1247}.
\bibitem[{Pavlov \& van~de Wouw(2008)}]{PaWo:08}
\bibinfo{author}{Pavlov, A.}, \& \bibinfo{author}{van~de Wouw, N.}
  (\bibinfo{year}{2008}).
\newblock \bibinfo{title}{Convergent discrete-time nonlinear systems: the case
  of {PWA} systems}.
\newblock In {\it \bibinfo{booktitle}{Proc. American Control Conference}\/}.
\bibitem[{Pavlov et~al.(2005{\natexlab{b}})Pavlov, van~de Wouw \&
  Nijmeier}]{PaPo:05}
\bibinfo{author}{Pavlov, A.}, \bibinfo{author}{van~de Wouw, N.}, \&
  \bibinfo{author}{Nijmeier, H.} (\bibinfo{year}{2005}{\natexlab{b}}).
\newblock \bibinfo{title}{Convergent piecewise affine systems: analysis and
  design - part i: continuous case}.
\newblock In {\it \bibinfo{booktitle}{Proceedings of the Conference Decision
  and Control}\/}.
\bibitem[{Porfiri et~al.(2008)Porfiri, Robertson \& Stilwell}]{PoRo:08}
\bibinfo{author}{Porfiri, M.}, \bibinfo{author}{Robertson, D.~G.}, \&
  \bibinfo{author}{Stilwell, D.~J.} (\bibinfo{year}{2008}).
\newblock \bibinfo{title}{Fast switching analysis of linear switched systems
  using exponential splitting}.
\newblock {\it \bibinfo{journal}{SIAM Journal on Control and Optimization}\/},
  {\it \bibinfo{volume}{47}\/}, \bibinfo{pages}{2582--2597}.
\bibitem[{Russo \& di~Bernardo(2009{\natexlab{a}})}]{Rus_diB_09}
\bibinfo{author}{Russo, G.}, \& \bibinfo{author}{di~Bernardo, M.}
  (\bibinfo{year}{2009}{\natexlab{a}}).
\newblock \bibinfo{title}{Contraction theory and the master stability function:
  linking two approaches to study synchronization in complex networks}.
\newblock {\it \bibinfo{journal}{IEEE Transactions on Circuit and Systems
  II}\/},  {\it \bibinfo{volume}{56}\/}, \bibinfo{pages}{177--181}.
\bibitem[{Russo \& di~Bernardo(2009{\natexlab{b}})}]{Rus_diB_09b}
\bibinfo{author}{Russo, G.}, \& \bibinfo{author}{di~Bernardo, M.}
  (\bibinfo{year}{2009}{\natexlab{b}}).
\newblock \bibinfo{title}{How to synchronize biological clocks}.
\newblock {\it \bibinfo{journal}{Journal of Computationa Biology}\/},  {\it
  \bibinfo{volume}{16}\/}, \bibinfo{pages}{379--393}.
\bibitem[{Russo \& di~Bernardo(2011)}]{RuDi:11}
\bibinfo{author}{Russo, G.}, \& \bibinfo{author}{di~Bernardo, M.}
  (\bibinfo{year}{2011}).
\newblock \bibinfo{title}{On contraction of piecewise smooth dynamical
  systems}.
\newblock In {\it \bibinfo{booktitle}{Proceedings of IFAC World Congress}\/}.
\bibitem[{Russo et~al.(2011)Russo, di~Bernardo \& Slotine}]{Rus_diB_Slo_11}
\bibinfo{author}{Russo, G.}, \bibinfo{author}{di~Bernardo, M.}, \&
  \bibinfo{author}{Slotine, J.} (\bibinfo{year}{2011}).
\newblock \bibinfo{title}{A graphical algorithm to prove contraction of
  nonlinear circuits and systems}.
\newblock {\it \bibinfo{journal}{IEEE Transactions on Circuits And Systems
  I}\/},  {\it \bibinfo{volume}{58}\/}, \bibinfo{pages}{336--348}.
\bibitem[{Russo et~al.(2013)Russo, di~Bernardo \& Sontag}]{RuBe:13}
\bibinfo{author}{Russo, G.}, \bibinfo{author}{di~Bernardo, M.}, \&
  \bibinfo{author}{Sontag, E.} (\bibinfo{year}{2013}).
\newblock \bibinfo{title}{A contraction approach to the hierarchical analysis
  and design of networked systems}.
\newblock {\it \bibinfo{journal}{IEEE Transactions on Automatic Control}\/},
  {\it \bibinfo{volume}{58}\/}, \bibinfo{pages}{1328--1331}.
\bibitem[{Russo et~al.(2010)Russo, di~Bernardo \& Sontag}]{Rus_diB_Son_10}
\bibinfo{author}{Russo, G.}, \bibinfo{author}{di~Bernardo, M.}, \&
  \bibinfo{author}{Sontag, E.~D.} (\bibinfo{year}{2010}).
\newblock \bibinfo{title}{Global entrainment of transcriptional systems to
  periodic inputs}.
\newblock {\it \bibinfo{journal}{PLoS Computational Biology}\/},  {\it
  \bibinfo{volume}{6}\/}, \bibinfo{pages}{e1000739}.
\bibitem[{Wang \& Slotine(2005)}]{Wan_Slo_05}
\bibinfo{author}{Wang, W.}, \& \bibinfo{author}{Slotine, J. J.~E.}
  (\bibinfo{year}{2005}).
\newblock \bibinfo{title}{On partial contraction analysis for coupled nonlinear
  oscillators}.
\newblock {\it \bibinfo{journal}{Biological Cybernetics}\/},  {\it
  \bibinfo{volume}{92}\/}, \bibinfo{pages}{38--53}.
\bibitem[{Yi et~al.(2013)Yi, Lu \& Chen}]{Yi_Lu_Chen:13}
\bibinfo{author}{Yi, X.}, \bibinfo{author}{Lu, W.~L.}, \&
  \bibinfo{author}{Chen, T.~P.} (\bibinfo{year}{2013}).
\newblock \bibinfo{title}{Achieving synchronization in arrays of coupled
  differential systems with time-varying couplings}.
\newblock {\it \bibinfo{journal}{Abstract and Applied Analysis}\/},  {\it
  \bibinfo{volume}{2013}\/}, \bibinfo{pages}{134265}.

\end{thebibliography}

\end{document}